\pgfplotsset{compat=1.11}
\title{Spherical metrics with conical singularities on 2-spheres}
\author{Subhadip Dey}
\date{}
\newcommand{\Address}{{
  \bigskip
  \footnotesize

  \textsc{Department of Mathematics}
    \par\nopagebreak
  \textsc{University of California, Davis}
    \par\nopagebreak
  \textsc{1 Shields Ave, Davis, CA 95616}
    \par\nopagebreak
  \textit{E-mail address}: \texttt{sdey@math.ucdavis.edu}
  }}
\begin{document}
\maketitle
\theoremstyle{plain}
\newtheorem{thm}{Theorem}
\newtheorem{lem}[thm]{Lemma}
\newtheorem{prop}[thm]{Proposition}
\newtheorem{cor}[thm]{Corollary}
\newtheorem{ques}{Question}

\theoremstyle{definition}
\newtheorem{defn}[thm]{Definition}
\newtheorem{conj}[thm]{Conjecture}
\newtheorem{exmp}[thm]{Example}

\theoremstyle{remark}
\newtheorem{rem}[thm]{Remark}
\def\t{\theta}\def\T{\pmb{\theta}}

\begin{abstract}
Suppose that $\theta_1,\theta_2,\dots,\theta_n$ are positive numbers and $n\ge 3$. We want to know whether there exists a spherical metric on $\mathbb{S}^2$ with $n$ conical singularities of angles $2\pi\theta_1,2\pi\theta_2,\dots,2\pi\theta_n$. A sufficient condition was obtained by Gabriele Mondello and Dmitri Panov \cite{mondello2016spherical}. We show that their condition is also necessary when we assume that $\theta_1,\theta_2,\dots,\theta_n \not\in \mathbb{N}$.
\end{abstract}

\section{Introduction}
A classical question in the theory of Riemann surfaces asks which real functions $f$ on a Riemann surface $S$ are equal to the curvature of a pointwise conformal metric. For simplicity, all the surfaces considered in this section are orientable closed 2-manifolds. If $S$ has genus $\ge 2$, then Berger \cite{berger1971riemannian} showed that any smooth negative function is the curvature of a unique conformal metric. In the case of a Riemann surface of genus $1$, i.e. when $S$ is a torus with a flat metric $g$, Kazdan and Warner \cite{kazdan1974curvature} proved that a function $f (\not\equiv 0): S \rightarrow \mathbb{R}$ is the curvature of a metric in the conformal class of $g$ if and only if $f$ changes sign and satisfies $\int_S f dA <0$, where $dA$ is the area form  $g$.

These results have been generalized by Marc Troyanov in the case of surfaces with singularities of a special type, called \emph{conical singularities} which we shall define below. Let $S$ be a surface. A \emph{real divisor} $\pmb{\beta}$ on $S$ is a formal sum
\[ \pmb{\beta} = \beta_1 x_1 + \dots + \beta_n x_n,\]
where $x_i \in S$  are pairwise distinct and $\beta_i$ are real numbers. For the pair $(S,\pmb{\beta})$, define 
\[\chi(S,\pmb{\beta}) = \sum_{i=1}^n\beta_i + \chi(S),\]
called the \emph{Euler characteristic} of $(S,\pmb{\beta})$. 

Suppose that $\pmb{\beta} = \beta_1 x_1 + \dots + \beta_n x_n$ is a real divisor on $S$ such that $\beta_i > -1$. Let $g$ be a Riemannian metric on $S$ defined away from $x_1,\dots,x_n$ such that each point $x_i$ has a neighborhood $U_i$ in $S$ with coordinate $z_i$ satisfying $z_i(x_i) = 0$ on which $g$ has the following form,
\begin{align*}\label{angle}
ds^2 = e^{2u_i} |z_i|^{2\beta_i}|dz_i|^2.
\end{align*}
Here $u_i : U_i\rightarrow \mathbb{R}$ is a continuous function such that $u_i|_{U_i - \{x_i\}}$ is differentiable (of class at least $C^2$). The point $x_i$ is called a \emph{conical singularitiy} of the metric $g$ of angle $2\pi(\beta_i+1)$. We refer to this type of metrics $g$ as \emph{(Riemannian) metrics with conical singularities}. We say that the metric $g$ (with conical singularities) \emph{represents} the divisor $\pmb{\beta}$.

In this terminology, Troyanov  \cite{troyanov1991prescribing}  proved the following two theorems.

\begin{thm} [\cite{troyanov1991prescribing}] \label{troyanov1}
Let $S$ be a Riemann surface with a real divisor $\pmb{\beta} = \beta_1 x_1 + \dots + \beta_n x_n$ such that $\beta_i > -1$, $i=1,\dots,n$. If $\chi(S,\pmb{\beta}) <0$, then any smooth negative function on $S$ is the curvature of a unique conformal metric which represents $\pmb{\beta}$.
\end{thm}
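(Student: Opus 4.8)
The plan is the classical variational one: reduce the prescribed-curvature problem to a scalar semilinear elliptic equation on the \emph{smooth} surface $S$, solve it by the direct method in the calculus of variations, and then invoke elliptic regularity; the hypothesis $\chi(S,\pmb\beta)<0$ will be used exactly once, to make the energy functional coercive. Fix an auxiliary smooth conformal metric $g_0$ on $S$ with Gaussian curvature $K_0$ and area form $dA_0$, so that $\int_S K_0\,dA_0=2\pi\chi(S)$ by Gauss--Bonnet, and set $A_0:=\mathrm{Area}(S,g_0)$. A conformal metric $g=e^{2u}g_0$ represents $\pmb\beta$ iff $u\in C^2(S\setminus\{x_1,\dots,x_n\})$ and $u-\beta_i\log|z_i|$ extends continuously across each $x_i$. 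To peel off the logarithmic part, let $\phi$ solve $-\Delta_{g_0}\phi=2\pi\sum_i\beta_i\big(A_0^{-1}-\delta_{x_i}\big)$ with $\Delta$ the Laplace--Beltrami operator; this is solvable up to an additive constant because the right-hand side has zero mean, $\phi$ is smooth off the $x_i$, and $\phi-\beta_i\log|z_i|$ is smooth across $x_i$. Writing $u=\phi+w$ and using the conformal change formula $K_{e^{2u}g_0}=e^{-2u}(K_0-\Delta_{g_0}u)$, the equation $K_g=f$ is equivalent to
\[
-\Delta_{g_0}w+\widetilde K_0=\widetilde f\,e^{2w}\qquad\text{on }S ,
\]
where $\widetilde K_0:=K_0+2\pi A_0^{-1}\sum_i\beta_i$ is smooth with $\int_S\widetilde K_0\,dA_0=2\pi\chi(S,\pmb\beta)$, and $\widetilde f:=f\,e^{2\phi}$ is negative, smooth off the $x_i$, and comparable near $x_i$ to $|z_i|^{2\beta_i}$ times a smooth nonvanishing function; since $\beta_i>-1$, we get $\widetilde f\in L^p(S,g_0)$ for some $p>1$. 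A bounded continuous solution $w$ then gives $u=\phi+w$ and hence the desired metric.

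Consider the functional
\[
J(w)=\tfrac12\int_S|\nabla_{g_0}w|^2\,dA_0+\int_S\widetilde K_0\,w\,dA_0+\tfrac12\int_S(-\widetilde f)\,e^{2w}\,dA_0
\]
on $H^1(S,g_0)$, whose Euler--Lagrange equation is the equation above. The Moser--Trudinger inequality --- together with $\widetilde f\in L^p$, $p>1$, and H\"older's inequality --- makes the last integral finite and turns $w\mapsto\int_S(-\widetilde f)e^{2w}$ into a $C^1$, weakly continuous functional on $H^1$. Since $-\widetilde f>0$ almost everywhere, that term is nonnegative, weakly lower semicontinuous, and strictly convex; the Dirichlet term is convex and weakly lower semicontinuous and the middle term is linear, so $J$ is strictly convex and weakly lower semicontinuous. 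For coercivity, write $w=\bar w+w_0$ with $\bar w$ the $g_0$-mean of $w$ and $\int_S w_0\,dA_0=0$; the Poincar\'e and Sobolev inequalities give $\big|\int_S\widetilde K_0\,w_0\big|\le C\|\nabla w\|_{L^2}$, hence
\[
J(w)\ \ge\ \tfrac12\|\nabla w\|_{L^2}^2-C\|\nabla w\|_{L^2}+2\pi\chi(S,\pmb\beta)\,\bar w+\tfrac12\int_S(-\widetilde f)\,e^{2w}\,dA_0 .
\]
If $\bar w\to-\infty$ the linear term $2\pi\chi(S,\pmb\beta)\bar w\to+\infty$ because $\chi(S,\pmb\beta)<0$ --- the only use of the hypothesis --- while the exponential term stays $\ge0$; if $\bar w\to+\infty$, then Jensen's inequality applied with the probability weight $(-\widetilde f)/\int_S(-\widetilde f)$ gives $\int_S(-\widetilde f)e^{2w}\gtrsim\exp\!\big(2\bar w-C\|\nabla w\|_{L^2}\big)$; and in all regimes the quadratic Dirichlet term absorbs the linear error in $\|\nabla w\|_{L^2}$. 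A short case analysis then yields $J(w)\to+\infty$ as $\|w\|_{H^1}\to\infty$. Consequently a minimizing sequence is bounded in $H^1(S,g_0)$ and, passing to a weak limit, $J$ attains its infimum at some $w$, which is a weak solution.

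It remains to deal with regularity and uniqueness. By Moser--Trudinger $e^{2w}\in L^q$ for every $q<\infty$, so $\widetilde f\,e^{2w}-\widetilde K_0\in L^p$ for some $p>1$, and elliptic estimates give $w\in W^{2,p}(S)\hookrightarrow C^{0,\alpha}(S)$; in particular $w$ is bounded and continuous on all of $S$, including at the $x_i$. Off the singular set the right-hand side is then smooth and bootstrapping yields $w\in C^\infty(S\setminus\{x_1,\dots,x_n\})$, so $u=\phi+w$ lies in $C^2(S\setminus\{x_1,\dots,x_n\})$ with $u-\beta_i\log|z_i|$ continuous across $x_i$; hence $g=e^{2u}g_0$ represents $\pmb\beta$ and has curvature $f$. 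For uniqueness, any conformal metric representing $\pmb\beta$ with curvature $f$ corresponds via $w=u-\phi$ to a critical point of $J$ in $H^1(S,g_0)$, and a strictly convex $C^1$ functional has at most one critical point. (Equivalently, if $u_1,u_2$ both work then $v=u_1-u_2$ is bounded and continuous on $S$, satisfies $\Delta_{g_0}v=f\big(e^{2u_2}-e^{2u_1}\big)$ off the $x_i$, and evaluating at a point where $v$ is maximal, together with $f<0$, forces $v\le0$; symmetrically $v\equiv0$.)

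The step I expect to be the main obstacle is the coercivity estimate: one must simultaneously tame the exponential nonlinearity carrying the mildly singular weight $\widetilde f$ --- which is exactly where the Moser--Trudinger inequality and the restriction $\beta_i>-1$ are indispensable --- and exploit the negativity of $\chi(S,\pmb\beta)$ to control the linear term as $\bar w\to-\infty$. A secondary point requiring care is verifying that the minimizer is continuous up to and including the conical points, so that the resulting metric has precisely the prescribed cone angles and no worse singularity there.
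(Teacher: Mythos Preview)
The paper does not prove this theorem: it is quoted from Troyanov's article \cite{troyanov1991prescribing} as background in the introduction, with no argument supplied. So there is nothing in the present paper to compare your proposal against.

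That said, your outline is essentially Troyanov's own variational proof, and the main steps are in order: absorbing the logarithmic singularities into an auxiliary potential $\phi$ via the Green equation, reducing to a Kazdan--Warner type equation for a globally $H^1$ function $w$ with an $L^p$ weight $\widetilde f$ (here $\beta_i>-1$ is exactly what makes $e^{2\phi}\in L^p$ for some $p>1$), and minimizing the strictly convex functional $J$. The coercivity analysis you sketch is the genuine crux and your case split is the right one; just be aware that in the regime $\bar w\to+\infty$ with $\|\nabla w\|_{L^2}$ also large one has to balance the quadratic Dirichlet term against the exponent $2\bar w-C\|\nabla w\|_{L^2}$, and the ``short case analysis'' does require a line or two of care. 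One small caveat on your parenthetical maximum-principle uniqueness argument: the maximum of $v=u_1-u_2$ could a priori occur at a conical point, where the classical pointwise inequality $\Delta v\le 0$ is not immediately available; your primary argument via strict convexity of $J$ avoids this issue cleanly, so I would lead with that.
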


\begin{thm} [\cite{troyanov1991prescribing}] \label{troyanov2}
Let $S$ be a Riemann surface with a real divisor $\pmb{\beta} = \beta_1 x_1 + \dots + \beta_n x_n$ such that $\beta_i > -1$, $i=1,\dots,n$. If $\chi(S,\pmb{\beta}) = 0$, then any function $f (\not\equiv 0): S \rightarrow \mathbb{R}$ is the curvature of a conformal metric representing $\pmb{\beta}$  if and only if $f$ changes sign and satisfies $\int_S f dA <0$, where $dA$ is the area element of a conformally flat metric on $S$ with singularities.
\end{thm}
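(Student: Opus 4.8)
Since $\chi(S,\pmb{\beta})=0$, I would begin by fixing a conformal metric $g_0$ on $S$ that is flat on $S\setminus\{x_1,\dots,x_n\}$, has the local form $|z_i|^{2\beta_i}|dz_i|^2$ near each $x_i$, and represents $\pmb{\beta}$; such a $g_0$ exists precisely because $\chi(S,\pmb{\beta})=0$, and its total area $|S|$ is finite because every $\beta_i>-1$. Writing a candidate metric as $g=e^{2u}g_0$, the transformation law for Gauss curvature together with $K_{g_0}\equiv 0$ shows that $g$ represents $\pmb{\beta}$ and has curvature $f$ if and only if $u$ is bounded near each $x_i$ and solves
\[-\Delta_{g_0}u=f\,e^{2u}\qquad\text{on }S\setminus\{x_1,\dots,x_n\},\]
where $\Delta_{g_0}$ is the Laplace--Beltrami operator of $g_0$ (normalized with nonpositive spectrum) and $dA=dA_{g_0}$. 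Since $\int|\nabla u|_{g_0}^2\,dA$ equals the flat Dirichlet integral in each conformal chart, $H^1(S,g_0)$ behaves like the usual Sobolev space near each $x_i$. The plan is to prove necessity by two integrations by parts and sufficiency by a constrained minimization of the Dirichlet energy.

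For necessity, let $u$ solve the equation. Integrating over $S$ --- the points $x_i$ contribute nothing, since $dA$ is integrable and $u$ is bounded there, which rules out a logarithmic singular part --- gives $\int_S f\,e^{2u}\,dA=0$, so, as $e^{2u}>0$ and $f\not\equiv 0$, the function $f$ must change sign. Multiplying the equation by $e^{-2u}$ and integrating by parts,
\[\int_S f\,dA=-\int_S e^{-2u}\,\Delta_{g_0}u\,dA=-2\int_S e^{-2u}|\nabla u|^2\,dA\le 0,\]
with equality only when $u$ is constant, i.e. only when $f\equiv 0$. Hence $\int_S f\,dA<0$.

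For sufficiency, assume $f$ changes sign and $\int_S f\,dA<0$. Writing $\bar u=\frac1{|S|}\int_S u\,dA$, I would minimize
\[J(u)=\tfrac12\int_S|\nabla u|^2\,dA\qquad\text{over}\qquad\mathcal C=\Bigl\{u\in H^1(S,g_0):\bar u=0,\ \int_S f\,e^{2u}\,dA\ge 0\Bigr\}.\]
First one checks $\mathcal C\ne\emptyset$: for a nonnegative bump $\eta$ supported in $\{f>0\}$ and $t$ large, $u=t(\eta-\bar\eta)$ has $\bar u=0$ and $\int_S f\,e^{2u}\,dA>0$, since $u\to-\infty$ on $\{f<0\}$ while the integrand blows up on $\operatorname{supp}\eta$. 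Because $\bar u=0$, the Poincar\'e inequality gives $\|u\|_{H^1}\le C\|\nabla u\|_{L^2}$, so $J$ is coercive on $\mathcal C$, and it is weakly lower semicontinuous. The crucial input is a Moser--Trudinger inequality for the conical metric $g_0$ (valid because each $\beta_i>-1$, with a sharp constant governed by $\min_i(1,1+\beta_i)$): it implies that $u_k\rightharpoonup u$ in $H^1$ forces $e^{2u_k}\to e^{2u}$ in $L^1(S,dA)$, so $\mathcal C$ is weakly closed. The direct method then produces a minimizer $u^\ast\in\mathcal C$. Next I would read off the Euler--Lagrange equation: if the inequality constraint were inactive at $u^\ast$ one would get $-\Delta_{g_0}u^\ast=\text{const}$, hence $u^\ast\equiv 0$, contradicting $\int_S f\,dA<0$; so the constraint is active, $\int_S f\,e^{2u^\ast}\,dA=0$, and the Karush--Kuhn--Tucker conditions give $-\Delta_{g_0}u^\ast=\lambda\,f\,e^{2u^\ast}+\mu$ with $\lambda\ge 0$ --- the sign being forced because the constraint is one-sided. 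Integrating over $S$ gives $\mu=0$, and $\lambda=0$ would again force $u^\ast\equiv 0\notin\mathcal C$, so $\lambda>0$. Then $u:=u^\ast+\tfrac12\log\lambda$ solves $-\Delta_{g_0}u=f\,e^{2u}$ weakly; interior elliptic regularity makes $u$ as smooth as $f$ allows away from the $x_i$, and the Moser--Trudinger bound together with the local form of $g_0$ shows $u$ is bounded (indeed continuous) at each $x_i$, so $g=e^{2u}g_0$ represents $\pmb{\beta}$ and has curvature $f$.

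The main obstacle is the analytic core of the sufficiency argument: establishing and correctly exploiting the Moser--Trudinger inequality on a surface with conical singularities --- this is what controls the exponential nonlinearity, makes the constraint functional weakly continuous, and neutralizes the conical points in the compactness step --- together with the sign assertion $\lambda>0$ for the Lagrange multiplier, without which $u^\ast$ could not be rescaled to a genuine solution. The remaining ingredients (coercivity, lower semicontinuity, interior elliptic regularity, and the construction of the flat conical background $g_0$) are routine once these two points are settled.
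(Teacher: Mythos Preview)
The paper does not give its own proof of this theorem: it is stated as a result of Troyanov \cite{troyanov1991prescribing} (generalizing Kazdan--Warner) and is quoted only as background in the introduction, so there is nothing in the paper to compare your argument against.

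That said, your sketch is a faithful outline of the standard proof. The necessity computation is correct as written. For sufficiency, your constrained variational setup --- minimize the Dirichlet energy over $\{\bar u=0,\ \int f e^{2u}\,dA\ge 0\}$, use a conical Moser--Trudinger inequality to get weak closedness of the constraint set, then read off the Euler--Lagrange equation and rule out $\lambda=0$ by the contradiction $u^\ast\equiv 0\Rightarrow\int f\,dA\ge 0$ --- is exactly the Kazdan--Warner mechanism transplanted to the singular setting, and is essentially what Troyanov does. You have correctly identified the two genuine analytic inputs: the Moser--Trudinger inequality on $(S,g_0)$ with cone points (this is where $\beta_i>-1$ enters), and the boundedness/regularity of $u$ at the $x_i$ so that $e^{2u}g_0$ still represents $\pmb\beta$. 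One small point worth making explicit in a full write-up is the constraint qualification needed for the KKT/Lagrange-multiplier step: the differential of $u\mapsto\int f e^{2u}\,dA$ at $u^\ast$ is $v\mapsto 2\int f e^{2u^\ast}v\,dA$, which is nonzero because $f\not\equiv 0$, so the multiplier rule applies and the sign $\lambda\ge 0$ is legitimate.
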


Theorem \ref{troyanov1} generalizes Berger's result, and Theorem \ref{troyanov2} generalizes Kazdan and Warner's result.
 Restricting our attention only to constant curvature metrics with conical singularities, we formulate the following question.

\begin{ques}\label{q1}
Let $S$ be a Riemann surface with a real divisor $\pmb{\beta} = \beta_1 x_1 + \dots + \beta_n x_n$ such that $\beta_i > -1$, $i=1,\dots,n$. Can $\pmb{\beta}$ be represented by a conformal metric of constant curvature?
\end{ques}

The case when $\pmb{\beta} = 0$ is completely understood due to the classical uniformization theorems. If $\chi(S)\ge 0$, then any conformal class has a representative of constant curvature.  When $\chi(S)<0$, existence and uniqueness of such a conformal metric is provided by classical uniformization theorems of Koebe and Poincar\'e.

\medskip
When $\pmb{\beta} \ne 0$, the Theorems \ref{troyanov1} and \ref{troyanov2} give complete understanding in the case when $\chi(S,\pmb{\beta})\le 0$. See also the work of McOwen \cite{mcowen1988point} for the $\chi(S,\pmb{\beta}) < 0$ case.

\medskip
Here we focus on the particular case when $S$ is the Riemann sphere, which has been least understood. Given a real divisor $\pmb{\beta} =  \beta_1 x_1 + \dots + \beta_n x_n$ with $\beta_i>-1$, $i=1,\dots,n$, does there exist a conformal metric $g$ with conical singularities on $S$ which represents $\pmb{\beta}$? If we further assume that $g$ has constant curvature $1$, then Gauss-Bonnet Theorem gives a restriction on $\pmb{\beta}$, namely
\[\chi(S,\pmb{\beta}) > 0.\]
But further restrictions on $\pmb{\beta}$ were found in addition to this in the case $n=2,3$. When $n=2$, we simply need to require that $\beta_1 = \beta_2$ by the work of Troyanov \cite{troyanov1989metrics}. When $n=3$, by Eremenko's result in  \cite{eremenko2004metrics}, a conformal metric with conical singularities exists if and only if some inequalities on the numbers $\beta_1,\beta_2,\beta_3$ are satisfied.  Moreover, in this case,
there exists a \emph{spherical triangle} with angles $\pi(\beta_1+1), \pi(\beta_2+1), \pi(\beta_3+1)$. Here a {spherical triangle} means a topological closed disk with a metric of constant curvature 1 such that the boundary is a piecewise geodesic loop with three singular points. Examples include geodesic triangles immersed in ordinary sphere $\mathbb{S}^2$. A sphere with three conical singularities is the double of such a triangle. When $n>3$, the situation becomes more complicated.

Therefore, before answering the Question \ref{q1} for higher $n$'s, perhaps one needs to have a complete list real divisors $\pmb{\beta}$ which may be represented by a spherical metric $g$ on a sphere with conical singularities. Here \emph{spherical} means that the metric has constant curvature $1$. Note that the divisor $\pmb{\beta}$ can be represented by a spherical metric with conical singularities if and only if there exists a spherical metric  with $n$ conical singularities of angles $2\pi(\beta_1+1), \dots, 2\pi(\beta_n +1)$.

\medskip
Let $\mathbb{R}_+^n$ be the set of all points in $\mathbb{R}^n$ with positive coordinates. A point $\pmb{\theta} = (\theta_1,\dots,\theta_n)\in \mathbb{R}_+^n$ is called \emph{admissible} if there exists a sphere $S$ with a spherical metric $g$ with $n$ conical points $x_1, \dots, x_n$ of angles $2\pi\theta_1,\dots,2\pi\theta_n$, respectively. In this case, the real divisor $\pmb{\beta} = (\theta_1-1)x_1 + \dots + (\theta_n -1)x_n$ is represented by $g$. By abuse of notation, we write $\pmb{\theta} - \pmb{1} = \pmb{\beta}$, where $\pmb{1} = (1,\dots,1)$.

\begin{ques}
Which points $\pmb{\theta} \in \mathbb{R}^n_+$ are admissible?
\end{ques}

A major progress was done to answer this question by Mondello and Panov \cite{mondello2016spherical}, as stated in the next two theorems.

\begin{thm} [\cite{mondello2016spherical}]\label{panov1}
If $\pmb{\theta} \in \mathbb{R}^n_+$ is admissible, then
\begin{align}	
	\chi(S, \pmb{\theta} -\pmb{1}) &> 0, \tag{P} \label{P}\\
	d_1(\mathbb{Z}_o^n, \pmb{\theta}-\pmb{1}) &\ge 1,	\tag{H}\label{H}
\end{align}
where $d_1$ is the standard $\ell^1$ distance on $\mathbb{R}^n$ and $\mathbb{Z}_o^n$ is the set of all points $\pmb{v} \in \mathbb{R}^n$ with integer coordinates such that $d_1(\pmb{v},\pmb{0})$ is odd. 
Moreover, if the equality in (H) is attained, then the holonomy of the a metric which corresponds to $\pmb{\theta}$ is coaxial.
\end{thm}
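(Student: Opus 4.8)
The plan is to work with the developing map of the spherical structure. Write $\Sigma=\{x_1,\dots,x_n\}$. Away from $\Sigma$ the metric $g$ is locally isometric to the round sphere, so it determines a developing map $\mathrm{dev}\colon\widetilde{\mathbb S^2\setminus\Sigma}\to\mathbb S^2$, a local isometry, together with a holonomy $\rho\colon\pi_1(\mathbb S^2\setminus\Sigma)\to\mathrm{PSU}(2)=\mathrm{SO}(3)$ for which $\mathrm{dev}$ is equivariant. The monodromy $\rho(\gamma_i)$ around $x_i$ is the rotation of $\mathbb S^2$ by $2\pi\theta_i$ about $\mathrm{dev}(x_i)$, and the lone relation $\gamma_1\cdots\gamma_n=1$ forces $\rho(\gamma_1)\cdots\rho(\gamma_n)=\mathrm{Id}$. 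Condition (P) is then immediate: since $K\equiv1$, the Gauss--Bonnet formula for conical metrics gives $\mathrm{Area}(\mathbb S^2,g)=2\pi\,\chi(\mathbb S^2,\pmb{\theta}-\pmb{1})$, and the left-hand side is positive.

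The substance is (H), and the first step is to pass to $\mathrm{SU}(2)$. The local isometry $\mathrm{dev}$ lifts (push forward a frame field on the simply connected domain) to a map into the oriented orthonormal frame bundle $\mathrm{SO}(3)$ of $\mathbb S^2$, and this lifts once more to $\widehat{\mathrm{dev}}\colon\widetilde{\mathbb S^2\setminus\Sigma}\to S^3=\mathrm{SU}(2)$, equivariant for a lift $\widehat\rho\colon\pi_1(\mathbb S^2\setminus\Sigma)\to\mathrm{SU}(2)$ of $\rho$ (the lift exists since $H^2(\mathbb S^2\setminus\Sigma;\mathbb Z/2)=0$). Tracing $\widehat{\mathrm{dev}}$ once around $\gamma_i$ one finds $\widehat\rho(\gamma_i)=\exp(\pi\theta_i\,\hat n_i)$ for a unit quaternion $\hat n_i$ along the axis through $\mathrm{dev}(x_i)$; in particular $\widehat\rho(\gamma_1)\cdots\widehat\rho(\gamma_n)=\mathrm{Id}$ with \emph{no} sign ambiguity (the word $\gamma_1\cdots\gamma_n$ is trivial), and $\widehat\rho(\gamma_i)$ lies at distance $\pi\,d(\theta_i,2\mathbb Z)$ from $\mathrm{Id}$ in the round $S^3$, where $d(\theta_i,2\mathbb Z)$ is the distance from $\theta_i$ to the nearest even integer. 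Expressing $\widehat\rho(\gamma_j)$ as a product of the $\widehat\rho(\gamma_i)^{\pm1}$ with $i\ne j$ and using bi-invariance of the metric on $S^3$ yields $d(\theta_j,2\mathbb Z)\le\sum_{i\ne j}d(\theta_i,2\mathbb Z)$ for every $j$. These inequalities are necessary but strictly weaker than (H): for instance $\pmb{\theta}=(1.7,1.7,1.7)$ satisfies all of them while $d_1(\mathbb Z_o^n,\pmb{\theta}-\pmb{1})=0.9<1$. Thus the extra content of (H) — a parity (equivalently, a lattice) refinement — is invisible to the holonomy alone and must be read off from the geometry of $\widehat{\mathrm{dev}}$.

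To recover that refinement I would split on whether $\rho$ is coaxial — i.e.\ its image preserves a two-point subset of $\mathbb S^2$, equivalently an axis $\ell$ — or not. In the coaxial case, $\rho(\gamma_i)$ being a rotation both about $\mathrm{dev}(x_i)$ and about $\ell$ forces $\mathrm{dev}(x_i)$ to lie on $\ell$ (or on the equator of $\ell$) whenever $\rho(\gamma_i)\ne\mathrm{Id}$, so the developed picture is rigid enough that a function built from the height along $\ell$ descends to a continuous function on all of $\mathbb S^2$. A Poincar\'e--Hopf count of its critical points should yield a relation $2=\chi(\mathbb S^2)=\sum_i(1-s_i)+c$ in which each $x_i$ contributes a generalized saddle of index $1-s_i$, with $s_i\in\{\lfloor\theta_i\rfloor,\lceil\theta_i\rceil\}$ fixed by the developed picture, and $c\ge 0$ counts the remaining (nonsingular) extrema; the integrality and non-negativity of $c$, together with the admissible values of the $s_i$, repackage precisely into $d_1(\mathbb Z_o^n,\pmb{\theta}-\pmb{1})\ge 1$, with equality exactly when $c=0$. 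In the non-coaxial case the target is the strict inequality $d_1>1$; here $\rho$ is irreducible, the $\widehat\rho(\gamma_i)$ do not all lie in one one-parameter subgroup of $S^3$, and the rigidity that makes the coaxial bound sharp is absent, so one expects $d_1>1$ by a separate argument — a count analogous to the coaxial one but with strictly positive correction, or a controlled degeneration of $g$ toward a coaxial metric. Granting this, the last clause is automatic: equality in (H) can occur only in the coaxial branch, so the holonomy of any metric attaining equality is coaxial.

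The main obstacle is precisely this refinement — turning the soft $S^3$ triangle inequalities into the sharp lattice statement (H). This requires (a) the coaxial/non-coaxial dichotomy and, inside the coaxial case, the exact accounting of which of $\lfloor\theta_i\rfloor,\lceil\theta_i\rceil$ occurs at each $x_i$ and of how the non-negative ``degree-like'' term $c$ enters; and (b) the strict inequality in the non-coaxial case. Everything upstream — the developing map, its $\mathrm{SU}(2)$ lift, Gauss--Bonnet, the coarse inequalities — is routine; the weight of the theorem is in the step from ``the holonomy is consistent'' to ``the metric exists,'' carried by the integral term $c$. An alternative to the Poincar\'e--Hopf route would be to induct on $n$, taking $n=2$ (Troyanov) and $n=3$ (Eremenko) as base cases and cutting the surface along geodesic arcs between cone points, or colliding two cone points, to reduce $n$ while tracking the change in $\pmb{\theta}-\pmb{1}$.
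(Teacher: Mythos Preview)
The paper does not prove this theorem. Theorem~\ref{panov1} is quoted from \cite{mondello2016spherical} and used as a black box; the present paper's own contribution is Theorems~\ref{thm:main} and~\ref{thm:general}, and the only place Theorem~\ref{panov1} enters the arguments is through its last clause (coaxial holonomy when equality holds in (\ref{H})), invoked in Proposition~\ref{prop:distance}. So there is no ``paper's own proof'' to compare against.

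As for your sketch itself: the overall architecture you propose --- develop, lift to $\mathrm{SU}(2)$, split coaxial/non-coaxial, and in the coaxial case run a Poincar\'e--Hopf count on a height function pulled back along the axis --- is in fact the strategy of Mondello--Panov. You are also right that the coarse triangle inequalities in $S^3$ are strictly weaker than (\ref{H}) and that the parity/lattice refinement is the real content. But your write-up is, by your own admission, not a proof: in the coaxial branch you have not actually established which of $\lfloor\theta_i\rfloor,\lceil\theta_i\rceil$ occurs or why the index identity repackages as $d_1(\mathbb Z_o^n,\pmb{\theta}-\pmb{1})\ge 1$, and in the non-coaxial branch you have only an expectation (``one expects $d_1>1$'') with no mechanism. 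The inductive alternative you mention at the end (cutting along geodesics, colliding cone points) is not how Mondello--Panov proceed and would require its own nontrivial analysis of how $\pmb{\theta}-\pmb{1}$ moves under surgery. If you want to supply a proof rather than cite one, you would need to fill exactly the two gaps you flagged.
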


Here we clarify what we mean by saying that the holonomy of a metric with conical sigularities is coaxial. Note that a metric $g$ on $S$ with conical singularities is actually defined as a Riemannian metric only on $S - \{\text{conical sigularities}\}$. Thus we have a holonomy representation of the metric $g$ which is a homomorphism $\phi : \pi_1(S - \{\text{conical sigularities}\}) \rightarrow SO(3)$ from the fundamental group of $S - \{\text{conical sigularities}\}$ to the group of rotations of the standard sphere $\mathbb{S}^2$. The metric $g$ is said to have \emph{coaxial holonomy} if the image of $\phi$ is contained in an one-parameter subgroup of $SO(3)$.

\medskip
The following is a partial converse to Theorem \ref{panov1}.

\begin{thm} [\cite{mondello2016spherical}] \label{panov2}
If $\pmb{\theta} \in \mathbb{R}^n_+$ satisfies the positivity constraints (\ref{P}) and the holonomy constraints (\ref{H}) strictly, then $\pmb{\theta}$ is admissible. Moreover, each metric corresponding to $\pmb{\theta}$ has non-coaxial holonomy.
\end{thm}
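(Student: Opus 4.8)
\medskip
\noindent The plan is to phrase everything through developing maps. A spherical metric with conical singularities on $S=\mathbb{S}^2$ with cone points $x_1,\dots,x_n$ of angles $2\pi\theta_1,\dots,2\pi\theta_n$ is the same datum as a pair $(\mathrm{dev},\rho)$: a holonomy representation $\rho\colon\pi_1(S\setminus\{x_1,\dots,x_n\})\to SO(3)$ together with a $\rho$-equivariant local isometry $\mathrm{dev}$ from the universal cover of $S\setminus\{x_1,\dots,x_n\}$ to the round sphere, modelled near $x_i$ on the multivalued map $z\mapsto z^{\theta_i}$ followed by a rotation. The holonomy is coaxial exactly when the image of $\rho$ fixes a point of $\mathbb{S}^2$, i.e.\ when $\rho$ is reducible, so the task is to produce such a pair with $\rho$ irreducible. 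The elementary building blocks with explicit developing data are the football $F_\theta$ (two cone points of angle $2\pi\theta$, existing for all $\theta>0$ by Troyanov \cite{troyanov1989metrics}, with coaxial holonomy), the spherical lunes, and the spherical triangles of Eremenko \cite{eremenko2004metrics}, whose doubles realize every spherical metric with three cone points and have irreducible holonomy unless the triangle degenerates.

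\medskip
\noindent A first approach is surgery with induction on $n$. A cone point of angle $2\pi\theta$ can be split into two cone points of angles $2\pi\theta_1,2\pi\theta_2$ with $\theta_1+\theta_2=\theta+1$, joined by a short geodesic, by a purely local move: excise a small metric disk about the cone point and glue in a small spherical cap carrying the desired pair (such local models exist once the inserted geodesic is short enough, and the move leaves $\chi(S,\pmb{\theta}-\pmb{1})$ unchanged since $(\theta_1-1)+(\theta_2-1)=\theta-1$). Its inverse---merging two cone points whose angles exceed $2\pi$ in sum---lowers $n$ by one and automatically preserves (\ref{P}). Since adding coordinates $i,j$ and subtracting $1$ defines a $1$-Lipschitz surjection $(\mathbb{R}^n,d_1)\to(\mathbb{R}^{n-1},d_1)$ taking $\mathbb{Z}_o^n$ onto $\mathbb{Z}_o^{n-1}$, the $\ell^1$ distance to the relevant $\mathbb{Z}_o$ cannot increase under a merge and may even drop below $1$; as (\ref{H}) is necessary, one must pick a merge keeping this distance $\ge1$ and the merged angle positive, and in general one cannot keep it strict---already $\pmb{\theta}=(\tfrac{3}{5},\tfrac{3}{5},\tfrac{3}{5},\tfrac{3}{5})$ satisfies (\ref{P}) and (\ref{H}) strictly while its only merge yields a triple with $d_1(\mathbb{Z}_o^3,\cdot)=1$ exactly. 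One therefore carries the boundary case $d_1=1$ (admissible with coaxial holonomy, by the separate classification of coaxial spherical cone metrics) as part of the induction, and recovers non-coaxiality on the way back up by always gluing in a cap whose own monodromy is non-coaxial, which forces the holonomy group of the enlarged metric to be non-coaxial irrespective of that of the smaller one. The crux of this route is a combinatorial lemma: for $n\ge4$, some merge preserving (\ref{P}), positivity, and $d_1(\mathbb{Z}_o^n,\cdot)\ge1$ always exists, reducing everything to the base case $n=3$ (Eremenko), possibly up to a short explicit list of exceptional tuples.

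\medskip
\noindent A second, more self-contained approach builds $(\mathrm{dev},\rho)$ directly. First produce an irreducible $\rho\colon\pi_1(S\setminus\{x_1,\dots,x_n\})\to SO(3)$ sending the $i$-th peripheral loop to a rotation of angle $2\pi\theta_i$; the existence of such an irreducible $\rho$ is a finite group-theoretic statement, and strict (\ref{H}) is exactly what makes it available, equality in (\ref{H}) being what (closely related to the last assertion of Theorem \ref{panov1}) forces every representation with these peripheral rotations to be coaxial. Then realize $\rho$ by a meromorphic projective structure on $\mathbb{CP}^1$ with regular singular points at the $x_i$ of the prescribed local exponents and monodromy $\rho$---i.e.\ solve a Riemann--Hilbert type problem for a rank-two Fuchsian equation whose unknowns are the positions $x_i$ (modulo $PSL(2,\mathbb{C})$) and the $n-3$ accessory parameters. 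There are $4n-12$ free real parameters against the $2n-6$ real conditions cutting out the $PSU(2)$-monodromy locus in the $PSL(2,\mathbb{C})$-character variety, so one expects a solution set of dimension $2n-6$, shown nonempty by a degree or continuity argument anchored at the explicit football, lune, and Eremenko configurations; the resulting developing map is the metric, and irreducibility of $\rho$ is precisely non-coaxiality of its holonomy.

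\medskip
\noindent The main obstacle, on either route, is the same: guaranteeing that what one produces is an \emph{honest} metric---that $\mathrm{dev}$ is an unbranched local isometry away from the $x_i$, equivalently that the density $4|f'|^2/(1+|f|^2)^2$ stays positive on $S\setminus\{x_1,\dots,x_n\}$---rather than merely a projective structure with the right monodromy, and showing that (\ref{P}) together with \emph{strict} (\ref{H}) supply exactly the room needed to preserve this positivity while deforming or gluing, staying clear of the degenerate (coaxial) locus throughout. In the surgery approach this is the combinatorial bookkeeping that keeps $d_1(\mathbb{Z}_o^n,\cdot)\ge1$ alive down to the base class; in the direct approach it is the unitarizability step, forcing the Fuchsian monodromy into $SO(3)$. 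The non-coaxiality half of the statement should by contrast be nearly automatic: one starts from, or inserts, an irreducible (non-coaxial) building block, and passing to a larger configuration only enlarges the holonomy group.
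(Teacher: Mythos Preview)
The paper does not prove this theorem. Theorem~\ref{panov2} is quoted verbatim from Mondello--Panov \cite{mondello2016spherical} and is used as a black box; the present paper's own contribution is Theorem~\ref{thm:main}, which addresses the complementary boundary case $d_1(\mathbb{Z}_o^n,\pmb{\theta}-\pmb{1})=1$. So there is no ``paper's own proof'' to compare your proposal against.

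As for the proposal itself: it is not a proof but an outline of two possible strategies, and you say as much when you isolate ``the main obstacle'' at the end. Your first route (merge cone points to reduce $n$, bottom out at Eremenko's $n=3$, then split back up while inserting non-coaxial caps) is in spirit the approach Mondello and Panov actually take, and you have correctly located the crux: the combinatorial lemma guaranteeing that for $n\ge 4$ some merge keeps positivity and $d_1(\mathbb{Z}_o,\cdot)\ge 1$. But you have not proved that lemma, only asserted it ``possibly up to a short explicit list of exceptional tuples''; nor have you justified that the local splitting move always produces an honest unbranched metric and can be arranged with non-coaxial cap monodromy. Your second route (Riemann--Hilbert/accessory parameters) is a plausible heuristic dimension count, but the step ``shown nonempty by a degree or continuity argument'' is exactly the hard part and is left entirely open. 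In short, both sketches identify the right difficulties without resolving them, so neither constitutes a proof.
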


Theorems \ref{panov1} and \ref{panov2} classified all the admissible points which do not satisfy (H'), where (H') is the equality in (\ref{H}),
\[ d_1(\mathbb{Z}_o^n,\pmb{\theta}-\pmb{1}) = 1,	\tag{H'}\label{H'} \]
but did not provide answer in the complementary case. We formulate this case in the following question.

\begin{ques}
For $n\ge 4$, which points in $\mathbb{R}_+^n$ satisfying (\ref{P}) and (\ref{H'}) together are admissible?
\end{ques}

In Theorem \ref{thm:main}, by restricting to the ``non-integral'' case, we give an answer to this question. We show that when $\theta_1,\dots,\theta_n \not\in \mathbb{N}$, the sufficient conditions in Theorem \ref{panov2} are actually necessary.

The ``integral'' case has been analyzed by Kapovich \cite{kapovich} who showed that a spherical metric with conical singularities of angles $2\pi\theta_1,\dots,2\pi\theta_n$, where $\theta_1,\dots,\theta_n \in \mathbb{N}$, exists if and only if the integers $\theta_1,\dots,\theta_n$ satisfy (\ref{H'}) together with the polygon inequality
\[(\theta_i - 1) \le \frac{1}{2}\sum_{j= 1}^n (\theta_j-1), \quad\text{for all }0\le i\le n.\]

A well-formulated collection of necessary and sufficient conditions on the complement of these two cases (the ``mixed'' case) is unknown to the author.

\subsection*{Acknowledgement}  I thank my advisor, Prof. Michael Kapovich, for introducing this problem to me. I am grateful for his constant encouragement, helpful comments and numerous discussions on this project. I also thank an anonymous referee for commenting on an earlier version of this paper which helped to improve the proof of the main result.

\section{Main Result}
The aim of this paper is to establish the following theorem.

\begin{thm}\label{thm:main}
Suppose that $\pmb{\theta} = (\theta_1,\dots, \theta_n)\in\mathbb{R}_+^n$ satisfies (P) and (H') and $n\ge 3$. If $\theta_i \not\in \mathbb{N}$ for all $1\le i\le n$ then $\pmb{\theta}$ is not admissible.
\end{thm}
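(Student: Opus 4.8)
The plan is to argue by contradiction: suppose $\pmb{\theta}$ with all $\theta_i \notin \mathbb{N}$ satisfies (P) and (H') \emph{and} is admissible, realized by a spherical metric $g$ on $S = \mathbb{S}^2$ with conical points $x_1,\dots,x_n$. The central observation is that since equality holds in (H), Theorem \ref{panov1} forces the holonomy representation $\phi\colon \pi_1(S\setminus\{x_1,\dots,x_n\})\to SO(3)$ to be \emph{coaxial}, i.e. its image lies in a single one-parameter rotation subgroup fixing an axis $\ell \subset \mathbb{R}^3$. So the whole problem reduces to understanding spherical metrics with conical singularities whose holonomy is coaxial, and then deriving a constraint that contradicts the assumption that no $\theta_i$ is an integer.

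The key step is the structure theory of coaxial spherical cone metrics. When the holonomy is contained in the rotation group about a fixed axis, the developing map can be post-composed/analyzed so that the metric is pulled back, on the universal cover of the punctured sphere, from the standard metric on $\mathbb{S}^2$ with its two poles on $\ell$; equivalently, away from the conical points the metric is a pullback of the round metric by a multivalued map into $\mathbb{S}^2$ that is equivariant for a rotation character $\pi_1 \to SO(2)$. Concretely, one shows that a coaxial spherical cone metric on $\mathbb{S}^2$ carries a \emph{meromorphic} $1$-form (the ``canonical'' direction field along which one rotates, coming from $d\log$ of the developing map composed with stereographic projection from a pole of $\ell$), whose zeros and poles are located precisely at the conical points $x_i$, with orders determined by the angles $2\pi\theta_i$. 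Because the developing map into $\mathbb{S}^2$, followed by projection to the axis, is a genuine meromorphic function $h$ on $S$ (the rotation holonomy acts trivially on the axial coordinate), one gets: near $x_i$, $h$ behaves like $z^{m_i}$ (or $z^{-m_i}$, or is regular-nonvanishing) for some integer $m_i$, and the cone angle $2\pi\theta_i$ is related to $|m_i|$ — in fact matching the local model forces $\theta_i$ to differ from $|m_i|$ by $0$. This is the crux: the local picture at each conical point of a coaxial metric, where the axial projection is meromorphic, pins the angle $2\pi\theta_i$ to be an \emph{integer} multiple of $2\pi$ at every cone point that is not one of the (at most two) preimages of the poles of $\ell$ — and at those special points one does a separate local analysis of the $\mathbb{S}^2$-valued map near a pole, which again yields an integer angle. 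Summing up: coaxial holonomy on $\mathbb{S}^2$ forces \emph{every} $\theta_i \in \mathbb{N}$.

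I would carry this out in the following order. First, set up the developing map $\mathrm{dev}$ and holonomy $\phi$ for $g$, and invoke Theorem \ref{panov1} to get that $\phi$ has image in a one-parameter subgroup $R \subset SO(3)$ fixing an axis $\ell$. Second, using coordinates on $\mathbb{S}^2$ adapted to $\ell$ (say stereographic projection $w$ sending the two poles to $0,\infty$, on which $R$ acts by $w \mapsto e^{i t} w$), observe that $|w\circ \mathrm{dev}|$ is a well-defined single-valued function on $S\setminus\{x_i\}$ — equivalently $\log|w\circ\mathrm{dev}|$ is a genuine harmonic-type function — so $\tfrac{d(w\circ\mathrm{dev})}{w\circ\mathrm{dev}}$ descends to a meromorphic $1$-form $\omega$ on all of $S$ (the monodromy being multiplication by $e^{it}$ kills the ambiguity in $d\log$). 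Third, compute the local form of $g$ and of $\omega$ near each $x_i$: the cone metric $e^{2u_i}|z_i|^{2(\theta_i-1)}|dz_i|^2$ being a pullback of the round metric under a map whose axial part is the (branched) function $w\circ\mathrm{dev}$ forces, via the standard local classification of spherical cone singularities with abelian local holonomy, that the local exponent $\theta_i - 1$ equals an integer minus $1$, i.e. $\theta_i \in \mathbb{N}$. (Here one must separately treat the finitely many $x_i$ mapping to the poles $w=0,\infty$, where $\omega$ has a pole rather than a zero, but the conclusion $\theta_i \in \mathbb{N}$ is the same.) Fourth, conclude: admissibility under (H') forces all $\theta_i \in \mathbb{N}$, contradicting the hypothesis that no $\theta_i$ is an integer; hence $\pmb{\theta}$ is not admissible.

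The main obstacle will be Step 3 — making the local analysis at the conical points fully rigorous, in particular proving that coaxiality really does force the angle at each $x_i$ to be $2\pi$ times a \emph{positive integer} and not merely rational or of some other restricted form, and correctly handling the at-most-two cone points that develop to the poles of the axis (where the model is $w\mapsto w^{m}$ near $0$ or $\infty$ and the cone angle reads off the ramification index). A subtlety worth isolating is that one should rule out the degenerate possibility that the ``axial projection'' $w\circ\mathrm{dev}$ is constant, which would force the metric to be (a piece of) a flat or degenerate metric rather than genuinely spherical; the constraint (P), $\chi(S,\pmb{\theta}-\pmb 1)>0$, together with positivity of the $\theta_i$, should exclude this and guarantee $\omega$ is a nonzero meromorphic $1$-form whose divisor, by the Riemann--Roch/degree count $\deg(\omega) = -2$ on $\mathbb{S}^2$, is consistent with and in fact recovers the relation $\sum(\theta_i - 1) = \chi(S,\pmb\theta-\pmb 1) - \chi(S)$, closing the argument.
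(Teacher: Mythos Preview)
Your central claim --- that coaxial holonomy on $\mathbb{S}^2$ forces \emph{every} cone angle to lie in $2\pi\mathbb{N}$ --- is false, and this is where the argument collapses. The simplest counterexample is already in the paper: a Troyanov football with two antipodal cone points of common angle $2\pi\theta$ has holonomy lying in a single $SO(2)$ (it is literally a surface of revolution), yet $\theta$ can be any positive real number. So Step~3 cannot go through as stated.

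The specific error is the assertion that $h = w\circ\mathrm{dev}$ (stereographic coordinate from a pole of the axis, composed with the developing map) is a ``genuine meromorphic function'' on $S$. The rotation holonomy does \emph{not} act trivially on $w$: rotation by angle $t$ about the axis sends $w\mapsto e^{it}w$. What is single-valued is $|w\circ\mathrm{dev}|$, or equivalently the $1$-form $\omega = d\log(w\circ\mathrm{dev})$, as you note --- but $h$ itself is genuinely multivalued whenever some $\theta_i\notin\mathbb{Z}$. Near a cone point $x_i$ of angle $2\pi\theta_i\notin 2\pi\mathbb{Z}$ the local holonomy is a nontrivial rotation about the axis, so $x_i$ must develop to one of the two poles; in a local coordinate $z$ the model is $w\circ\mathrm{dev}(z)=z^{\theta_i}$, not $z^{m_i}$ for an integer $m_i$. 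Correspondingly $\omega$ has a simple pole at $x_i$ with residue $\pm\theta_i$, and the residue theorem only yields a linear relation among the $\theta_i$, not integrality. Your degree count $\deg(\omega)=-2$ likewise gives no integrality.

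The paper's route is entirely different and geometric rather than complex-analytic. It first proves a general statement (Theorem~\ref{thm:general}): on any complete singular spherical surface, if a cone point $x$ has no geodesic loop of length $<\pi$ based at it, then some other cone point lies within distance $\pi$ of $x$, and equality forces at most two singularities. Then, using coaxiality (from Theorem~\ref{panov1}) together with the hypothesis $\theta_i\notin\mathbb{N}$, it shows via a branched-double-cover trick that every geodesic arc with singular endpoints has length in $\pi\mathbb{Z}$. These two facts combine to force $n\le 2$, contradicting $n\ge 3$.
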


The case $n=3$ has already been treated in \cite{eremenko2004metrics}. 

\medskip
By a \emph{singular spherical surface}, we mean a surface, possibly with boundary, with a spherical metric $g$ with a set $X$ of conical singularities.  The points in $X$ are called \emph{singular points} and the points in $S - (\partial S \cup X)$ are called \emph{regular points}. We further assume that the boundary, if nonempty, is a union of smooth curves. A path $\gamma$ in $S$ is called a \emph{geodesic arc} if it's restriction to the complement of $X$ is a connected geodesic arc in the Riemannian sense. We allow geodesic arcs to have singular endpoints. If a geodesic arc has same initial and final point, we call it a \emph{geodesic loop}. A composition of piecewise geodesic arcs is called a \emph{peicewise geodesic path}. The metric $g$ induces a distance function $d_S$ defined by
\[ d_S(x',x'') = \text{infimum over the length of piecewise geodesic paths connecting } x'\text{ and } x''.\]
We prove,

\begin{thm}\label{thm:general}
Suppose that $S$ is a singular spherical surface without boundary with a discrete set of conical singularities $X$. Suppose that the metric  $d_S$ is complete. For $x\in X$, if $S$ has no geodesic loop based at $x$ of length shorter than $\pi$, then
\[\min_{x\ne x'\in X}\{d_S(x,x')\} \le \pi.\]
If the equality occurs, then $S$ is compact and $|X| \le 2$.
\end{thm}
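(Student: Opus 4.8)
The plan is to fix a singular point $x\in X$, study the exponential map $\exp_x$, and show that under the no--short--loop hypothesis it develops $S$ out of a spherical football. Suppose $x$ has cone angle $2\pi\theta$. Writing the metric in geodesic polar coordinates $(\rho,\phi)$ centred at $x$ and solving the Jacobi equation $f''+f=0$ (curvature $1$) with the initial data imposed by the cone, one finds that near $x$ the metric is \emph{exactly} $d\rho^2+\sin^2\rho\,d\phi^2$ with $\phi\in\mathbb{R}/2\pi\theta\mathbb{Z}$. Hence $\exp_x$ is a local isometry from an open part of the \emph{spherical football} $F_\theta$, by which I mean the topological $2$--sphere $[0,\pi]_\rho\times(\mathbb{R}/2\pi\theta\mathbb{Z})_\phi$ with that metric; $F_\theta$ has exactly two cone points $p_0,p_\pi$, the collapsed circles $\{\rho=0\}$ and $\{\rho=\pi\}$, each of angle $2\pi\theta$. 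There is then a dichotomy: either some geodesic issuing from $x$ meets $X\setminus\{x\}$ at time $\le\pi$, in which case the desired bound $\min_{x'\ne x}d_S(x,x')\le\pi$ is immediate; or no geodesic from $x$ meets $X$ before time $\pi$ — here one uses \emph{both} the hypothesis (to forbid hitting $x$, i.e.\ a short geodesic loop) and the failure of the first alternative — so that $\exp_x$ is defined and is a local isometry on all of $F_\theta\setminus\{p_\pi\}$.

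In this second case the key step, and the main obstacle, is to extend $\exp_x$ over $p_\pi$. The coordinate circles $\{\rho=\mathrm{const}\}$ have length $2\pi\theta\sin\rho\to 0$ as $\rho\to\pi$, so by completeness of $d_S$ their images contract to a single point $y$; this yields a continuous map $\exp_x\colon F_\theta\to S$ with $\exp_x(p_\pi)=y$ and $d_S(x,y)\le\pi$. I would then argue that, $F_\theta$ being compact and $\exp_x$ being open and a local homeomorphism away from the single point $p_\pi$, the map $\exp_x$ is a branched covering of some degree $d\ge 1$ whose only possible branch point is $p_\pi$ (at $p_0$ it is a local isometry, local degree $1$); near $p_\pi$ it wraps the cone of angle $2\pi\theta$ some $k\ge 1$ times around $y$, so the cone angle at $y$ equals $2\pi\theta/k$. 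In particular $\exp_x(F_\theta)=S$, so $S$ is compact, and Riemann--Hurwitz gives $2=\chi(F_\theta)=d\,\chi(S)-(k-1)$, whence $d\,\chi(S)=k+1\ge 2$ and $\chi(S)\ge 1$.

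To pin down the degree I would use the no--short--loop hypothesis. A preimage $q\ne p_0$ of $x$ under $\exp_x$ produces, through the radial geodesic $p_0q$ in $F_\theta$, a geodesic loop at $x$ of length $\rho(q)\le\pi$; the hypothesis forbids length $<\pi$, so $q=p_\pi$, which means $y=x$. Provided $X$ has at least two points (which holds in all intended applications, where $|X|=n\ge 3$), the case $y=x$ is impossible: it forces $k=1$, hence an unbranched double cover $F_\theta\to S$ with $\chi(S)=1$ both of whose cone points map to $x$, giving $|X|=1$. Hence $\exp_x^{-1}(x)=\{p_0\}$ with local degree $1$, so $d=1$; therefore $\exp_x$ is a homeomorphism, unbranched ($k=1$), and being a local isometry on the dense open set $F_\theta\setminus\{p_\pi\}$ it is a global isometry. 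Consequently $S$ is the football $F_\theta$: it is compact, its only cone points are $x$ and $y$, so $|X|\le 2$, and $d_S(x,y)=d_{F_\theta}(p_0,p_\pi)=\pi$.

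It remains to assemble the statement. The bound $\min_{x'\ne x}d_S(x,x')\le\pi$ holds in the first alternative directly and in the second because $d_S(x,y)=\pi$ there. If equality $\min_{x'\ne x}d_S(x,x')=\pi$ holds, then one cannot be in the first alternative with a geodesic reaching $X$ before time $\pi$ (that would make the minimum $<\pi$), while a geodesic reaching $X$ at exactly time $\pi$ again puts us in the ``no geodesic from $x$ meets $X$ before time $\pi$'' situation and hence reduces to the second alternative; so $S=F_\theta$, which is compact with $|X|\le 2$, as claimed. The one genuinely delicate point, as flagged above, is the passage across the degenerating circle $p_\pi$ — showing the continuous extension exists and is an honest branched covering rather than a pathological or infinite--sheeted map — which is exactly where completeness is indispensable.
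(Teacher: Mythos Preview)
Your overall strategy coincides with the paper's: develop a Troyanov football $\Sigma=F_\theta$ into $S$ by the exponential map at $x$, push the radius out to $\pi$, extend across the far tip $p_\pi$ by completeness, and argue surjectivity. The paper carries out the first step as an open--closed argument on the set of radii $r$ admitting a local isometry $\Sigma_r\to S$ (Arzel\`a--Ascoli for closedness, a patching lemma for openness), then extends to a continuous $F:\Sigma\to S$ and proves surjectivity by a short connectedness argument showing $F(p_\pi)$ is an interior point of the image.

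Where you diverge is the endgame, and this is where your self-flagged ``genuinely delicate point'' becomes an unnecessary burden. You aim for the stronger conclusion $S\cong F_\theta$ via a branched-cover/Riemann--Hurwitz count, which forces you to analyse the local model of $\exp_x$ at $p_\pi$ (finite local degree $k$, angle at $y$ equal to $2\pi\theta/k$, etc.). None of that is needed for the theorem as stated. The paper's finish is one line: $F$ is a local isometry on $\Sigma\setminus\{p_0,p_\pi\}$, and local isometries take regular points to regular points; since $F$ is surjective, every conical point of $S$ must lie in $\{F(p_0),F(p_\pi)\}=\{x,y\}$, giving $|X|\le 2$, and compactness of $S$ is immediate from compactness of $\Sigma$. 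So you can drop the entire degree/Riemann--Hurwitz paragraph: once you have surjectivity and ``regular $\mapsto$ regular'', you are done, and the local behaviour of $\exp_x$ at $p_\pi$ is irrelevant. (Incidentally, your stronger claim $S\cong F_\theta$ is not part of the theorem and in fact fails when $|X|=1$: the fixed-point-free involution $(\rho,\phi)\mapsto(\pi-\rho,\phi+\pi\theta)$ of $F_\theta$ produces a quotient with a single cone point and no geodesic loop at it shorter than $\pi$, realising the $d=2$, $\chi(S)=1$ branch of your Riemann--Hurwitz dichotomy.)
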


Combining Theorem \ref{thm:main} with Theorems \ref{panov1} and \ref{panov2}, we get necessary and sufficient conditions on $(\theta_1,\dots,\theta_n)$, provided $\theta_i \not\in \mathbb{N}$, for which there exists a sphere $S$ with $n\ge 3$ conical singularities of angles $2\pi\theta_1,\dots,2\pi\theta_n$. The analogous case when $n\le 2$ is known from \cite{troyanov1989metrics}, as discussed in the previous section.

\begin{thm}
Suppose that $\pmb{\theta} = (\theta_1,\dots, \theta_n)\in\mathbb{R}_+^n$ where $n\ge 3$. If $\theta_i\not\in \mathbb{N}$, $1\le i\le n$, then $\pmb{\theta}$ is admissible if and only if 
\begin{align*}	
	\chi(S, \pmb{\theta} -\pmb{1}) &> 0, \\
	d_1(\mathbb{Z}_o^n, \pmb{\theta}-\pmb{1}) &> 1,
\end{align*}
where $d_1$ is the standard $\ell^1$ distance on $\mathbb{R}^n$ and $\mathbb{Z}_o^n$ is the set of all points $\pmb{v} \in \mathbb{R}^n$ with integer coordinates such that $d_1(\pmb{v},\pmb{0})$ is odd. 
\end{thm}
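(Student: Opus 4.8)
The plan is to derive Theorem \ref{thm:main} from Theorem \ref{thm:general} applied to the spherical metric hypothetically associated with an admissible $\pmb{\theta}$ satisfying (P) and (H'). So suppose, for contradiction, that $\pmb{\theta}=(\theta_1,\dots,\theta_n)$ with all $\theta_i\notin\mathbb N$ satisfies (P) and (H'), and is admissible; let $S\cong\mathbb S^2$ carry a spherical metric $g$ with conical singularities $x_1,\dots,x_n$ of angles $2\pi\theta_i$, and let $X=\{x_1,\dots,x_n\}$. Since $S$ is a closed surface and $g$ induces a length metric, $(S,d_S)$ is compact, hence complete, so the hypotheses of Theorem \ref{thm:general} are met as soon as we verify the geodesic-loop condition. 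The first key step is therefore: \emph{show that, because (H') holds and some $\theta_i\notin\mathbb N$, there is no geodesic loop based at a suitable $x_i$ of length $<\pi$.} Here I expect to invoke the ``moreover'' clause of Theorem \ref{panov1}: equality in (H) forces the holonomy to be coaxial. For a coaxial spherical metric the developing map lands in configurations rotating about a fixed axis, and a short geodesic loop at $x_i$ would, after developing, produce a lune whose angle is $\pi\cdot(\text{something})$; the condition $\theta_i\notin\mathbb N$ should preclude the existence of a loop of length exactly a multiple forced by the coaxial structure, or more directly preclude loops shorter than $\pi$ at the relevant cone point. (If a geodesic loop of length $<\pi$ based at $x_i$ existed, it would bound an embedded monogon/bigon whose developed image constrains $\theta_i$ to be an integer, contradicting $\theta_i\notin\mathbb N$.) Once this is in place, Theorem \ref{thm:general} yields $\min_{i\ne j}d_S(x_i,x_j)\le\pi$, and — crucially — the equality case is excluded because $|X|=n\ge 3>2$; hence $\min_{i\ne j}d_S(x_i,x_j)<\pi$ strictly.

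The second key step is to convert the strict inequality $\min_{i\ne j}d_S(x_i,x_j)<\pi$ into a contradiction with (H'). Take $x_i,x_j$ realizing the minimum and let $\gamma$ be a shortest geodesic arc between them, of length $\ell<\pi$. Developing a neighborhood of $\gamma$ into $\mathbb S^2$ sends $\gamma$ to a minimizing great-circle arc of length $\ell<\pi$ between two points $p_i,p_j$. Now perform a ``fusion''/surgery: cut $S$ along $\gamma$ and reglue, or directly construct a new spherical metric with conical singularities on $S$ in which the two cone points $x_i,x_j$ are merged into a single cone point whose angle is determined by $\theta_i,\theta_j$ and the two angles that $\gamma$ makes at its endpoints; the total cone angle data changes by an integer-lattice translation of a specific, controlled type. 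The point is that the existence of such a short connecting geodesic, together with the coaxial holonomy, lets one exhibit a lattice vector $\pmb v\in\mathbb Z_o^n$ with $d_1(\pmb v,\pmb\theta-\pmb1)<1$, contradicting (H). Alternatively — and this may be the cleaner route — one argues that a geodesic arc of length $<\pi$ joining $x_i$ to $x_j$ develops to a great-circle arc, its double (going out and coming back along nearby geodesics, using the coaxial holonomy to close up) is a geodesic loop, and analyzing this loop forces a strict improvement $d_1(\mathbb Z_o^n,\pmb\theta-\pmb1)<1$.

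The main obstacle I anticipate is precisely the passage from the metric/geometric conclusion of Theorem \ref{thm:general} back to the arithmetic statement (H), i.e. making rigorous how a short inter-singular geodesic interacts with the coaxial holonomy to violate $d_1(\mathbb Z_o^n,\pmb\theta-\pmb1)\ge1$. In the coaxial case the metric is, roughly, a ``spherical surface of revolution with branch points'', and one has good normal forms (e.g. pulling back $d\rho^2+\sin^2\rho\,d\varphi^2$ under a multivalued map, with the cone points on the axis or paired off the axis); a short geodesic between two cone points constrains how these data fit together and should directly give a nearby odd-integer lattice point. Handling the case where the minimizing pair might involve a cone point with integer-like local behavior, or where $\gamma$ passes through or near a third cone point, will require the discreteness of $X$ and the no-short-loop hypothesis, both already available. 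I would organize the write-up as: (1) reduce to the coaxial normal form via Theorem \ref{panov1}; (2) rule out geodesic loops of length $<\pi$ at a chosen $x_i$ using $\theta_i\notin\mathbb N$; (3) apply Theorem \ref{thm:general} to get the strict inequality $\min_{i\ne j}d_S(x_i,x_j)<\pi$; (4) develop a shortest inter-singular geodesic and run the surgery/doubling argument to produce $\pmb v\in\mathbb Z_o^n$ with $d_1(\pmb v,\pmb\theta-\pmb1)<1$, contradicting (H'). The final displayed theorem then follows immediately by combining Theorem \ref{thm:main} (which upgrades (H) to the strict inequality $d_1(\mathbb Z_o^n,\pmb\theta-\pmb1)>1$ in the non-integral admissible case) with the sufficiency direction of Theorem \ref{panov2}.
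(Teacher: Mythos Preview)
Your skeleton is right (coaxial holonomy via Theorem~\ref{panov1}, then Theorem~\ref{thm:general}), but you are working much harder than needed in step~(4), and the argument there is not actually carried out. The paper avoids your step~(4) entirely by strengthening step~(2): it shows that \emph{every} geodesic arc in $S$ with singular endpoints (not just loops) has length an integral multiple of $\pi$. For arcs with distinct endpoints this is precisely Lemma~2.10 of Mondello--Panov \cite{mondello2016spherical}: on a coaxial spherical sphere, a geodesic joining two cone points of angles $\notin 2\pi\mathbb N$ has length in $\pi\mathbb Z$. Once you know this, you already have $d_S(x_1,x_j)\ge\pi$ for all $j\ne 1$; then Theorem~\ref{thm:general} forces the equality case and hence $|X|\le 2$, contradicting $n\ge 3$. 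No surgery, no fusion, no passage back from geometry to the lattice condition (H) is required.

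Your treatment of loops in step~(2) is also not quite right as stated. A short geodesic loop at $x_i$ does not by itself force $\theta_i\in\mathbb N$ via a ``monogon/bigon'' picture; that heuristic is not a proof. The paper handles loops by a branched double cover trick: take a two-fold cover of $S$ branched at one regular point on each side of the loop $\gamma$; the pullback metric is again coaxial, the loop lifts to a geodesic arc $\tilde\gamma$ joining the two distinct preimages of $x_1$ (both with non-integer cone angles), and now the Mondello--Panov lemma applies to $\tilde\gamma$, giving $\mathrm{length}(\gamma)=\mathrm{length}(\tilde\gamma)\in\pi\mathbb Z$. So both the loop case and the distinct-endpoints case reduce to the same cited lemma, and your proposed step~(4) disappears.
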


\section{Proof of Theorem \ref{thm:general}}

Let $\Sigma$ be a closed, genus zero, singular spherical surface with two conical singularities $y$ and $y'$ of angle $\theta$. Such a surface has a shape of a football, which we call a ``Troyanov's football''. We denote the closed $r$-neighborhood of $y$ by $\Sigma_r$. For small $r$, $\Sigma_r$ is a model neighborhood of a conical singularity of angle $\theta$. Note that when $r\ge\pi$, $\Sigma_r = \Sigma$.

\begin{defn}
Let $f:S_1\rightarrow S_2$ be a map between two singular spherical surfaces. Such a map $f$ is called a \emph{locally isometric} map (or a \emph{local isometry}) between these surfaces if the following conditions are satisfied.
\begin{enumerate}
\item $f$ maps the singular (resp. regular) points of $S_1$ to the singular (resp. regular) points of $S_2$.
\item $f$ is a local isometry on $S_1 - (\partial S\cup X)$ in the Riemannian sense.
\item For each boundary point $s_1\in \partial S_1$, there exists a neighborhood $U$ of $s_1$, a neighborhood $V$ of $s_2$, isometries $\phi_1:U\rightarrow\mathbb{S}^2$, $\phi_2:V\rightarrow\mathbb{S}^2$ and $\hat{f}\in \text{Isom}(\mathbb{S}^2)$ such that the following square commutes:
\[\begin{tikzcd}
U \arrow[r, "f"] \arrow[d, "\phi_1"] &V\arrow[d, "\phi_2"]\\
\mathbb{S}^2 \arrow[r, "\hat{f}"] & \mathbb{S}^2
\end{tikzcd}\]
\end{enumerate}
$f$ is called an \emph{isometric embedding} if it is an injective locally isometric map.
\end{defn}

Let $S$ be a singular spherical surface without boundary with a discrete set $X$ of conical singularities such that the underlying metric space structure of $S$ is complete. Let $x\in X$ be a singular point. We define
\[l_x = \inf \{l' >0\mid x\text{ can be connected to a point in $X$ by a geodesic arc of length $l'$}\}.\]
Note that in the definition of $l_x$, we also allow geodesic loops based at $x$. If $S$ has has only one singular point $x$ and no geodesic loops based at $x$, i.e. when the definition of $l_x$ given above becomes void, then we make the convention $l_x = \pi$. 

\medskip
Let $\Sigma$ be a Troyanov's football with singular points $y$ and $y'$ of angles equal to the angle of $S$ at $x$. Our goal is the following: We show that, for any $r<l_x$, there exists a local isometry $f : \Sigma_r\rightarrow S$  which sends $y$ to $x$. When $l_x\ge\pi$, we argue that $S$ can have  at most two conical singular points.

\begin{prop}\label{R} 
There exists a family of local isometries $f_r : \Sigma_r\rightarrow S$, $f_r(y) = x$, indexed by $r\in[0,\min\{l_x,\pi\})$, such that, for $s>r$, $f_{s}|_{\Sigma_r} = f_r$.
\end{prop}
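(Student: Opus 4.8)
The plan is to construct the family $f_r$ by continuity in $r$, starting from the trivial isometry $f_0$ on the point $\Sigma_0 = \{y\}$ and extending it across larger and larger metric balls. Fix $R = \min\{l_x,\pi\}$ and let $r_0$ be the supremum of all $r \in [0,R)$ for which a local isometry $f_r : \Sigma_r \to S$ with $f_r(y) = x$ exists and is compatible with the previously constructed ones (i.e. $f_s|_{\Sigma_t} = f_t$ whenever $t \le s$ are both below $r$). Since for small $r$ the ball $\Sigma_r$ is by definition a model cone neighbourhood of a conical point of angle $\theta$, and $S$ near $x$ is isometric to such a model, a local isometry $f_r$ certainly exists for all sufficiently small $r>0$; thus $r_0 > 0$. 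The compatibility condition forces these maps to glue to a single local isometry $f_{r_0^-} : \bigcup_{r<r_0}\Sigma_r = \Sigma_{r_0}^{\circ} \to S$ on the open ball, and I would first check that $f_{r_0^-}$ extends continuously (indeed isometrically) to the closed ball $\Sigma_{r_0}$: this uses completeness of $d_S$ together with the fact that $f_{r_0^-}$ is $1$-Lipschitz, so Cauchy sequences in $\Sigma_{r_0}$ map to Cauchy sequences in $S$.

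The heart of the argument is the extension step: showing $r_0 = R$. Suppose $r_0 < R$. I want to enlarge $f_{r_0}$ to a local isometry on $\Sigma_{r_0 + \varepsilon}$ for some $\varepsilon > 0$. The key geometric input is that no point of the metric sphere $\partial\Sigma_{r_0} \subset \Sigma$ can be a conical point other than $y$ (its only other conical point $y'$ lies at distance $\pi \ge R > r_0$), so every boundary point of $\Sigma_{r_0}$ has a genuine spherical disk neighbourhood in $\Sigma$. I would argue that, because $r_0 < l_x$, the image $f_{r_0}(\partial\Sigma_{r_0})$ avoids $X \setminus \{x\}$ as well — a point of $\partial\Sigma_{r_0}$ mapping to some $x' \in X$ would, concatenated with a radial geodesic from $y$, produce a geodesic arc in $S$ from $x$ to $x'$ of length $\le r_0 < l_x$, contradicting the definition of $l_x$. (One must be mildly careful that the radial segment followed by a short arc is still a geodesic arc in the sense defined, or else subdivide; a point mapping to $x$ itself would give a geodesic loop of length $\le r_0 < \pi$, again excluded.) Hence every point of $\partial\Sigma_{r_0}$ has a neighbourhood in $\Sigma$ which is a spherical disk mapped by $f_{r_0}$ isometrically into a regular spherical disk of $S$; by analytic continuation of isometries of $\mathbb{S}^2$ along paths, and because a uniform radius $\varepsilon$ works along the compact set $\partial\Sigma_{r_0}$, these local extensions agree on overlaps and patch together with $f_{r_0}$ to give the desired $f_{r_0+\varepsilon}$. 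This contradicts maximality of $r_0$, so $r_0 = R$, and the family $\{f_r\}_{r<R}$ is exactly what is claimed, the compatibility $f_s|_{\Sigma_r} = f_r$ being built into the construction.

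The main obstacle I anticipate is the patching/uniqueness bookkeeping in the extension step: one must ensure the locally defined isometric extensions near different boundary points of $\Sigma_{r_0}$ are mutually consistent and consistent with $f_{r_0}$. This is a monodromy-type issue, and the reason it works is that $\Sigma_{r_0}$ is simply connected (it is a metric ball in a sphere, topologically a disk) and $f_{r_0}$ is already globally defined on it, so the germ of the extension at each boundary point is uniquely determined by $f_{r_0}$ via the local development into $\mathbb{S}^2$; there is no room for monodromy. A secondary technical point is making precise the claim "for small $r$, $\Sigma_r$ is a model neighbourhood" and that $S$ near $x$ is isometric to the same model — this is standard for conical singularities but should be invoked explicitly to start the induction. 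I would also note for later use (this feeds into Theorem \ref{thm:general}) that when $l_x \ge \pi$ the construction runs all the way to $r \to \pi^-$, at which point $\Sigma_r \to \Sigma$ and one obtains a local isometry from almost all of a Troyanov football into $S$; pushing this to the limit and analyzing the image will be what bounds $|X|$, but that lies beyond the present proposition.
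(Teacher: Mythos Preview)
Your proposal is correct and follows the same overall strategy as the paper: a connectedness/continuity argument on the set of radii $r\in[0,L)$ admitting such an $f_r$, with the ``openness'' step handled exactly as you describe (local extensions along the compact boundary circle, patched by uniqueness of analytic continuation).

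The one substantive difference is in the ``closedness'' step. You glue the compatible family into a single $1$-Lipschitz map on the open ball and extend to $\Sigma_{r_0}$ by completeness of $S$; the paper instead extends each $f_{r_k}$ constantly along outward normal geodesics to get an equicontinuous family on $\Sigma_r$, applies Arzel\`a--Ascoli to extract a limit, and then spends real effort checking that the limit is a local isometry at boundary points---splitting into the cases $r\le\pi/2$ and $r>\pi/2$ according to whether the relevant half-disk $U\cap\Sigma_r$ is geodesically convex, and in the latter case giving an ad hoc injectivity argument. Your route is more elementary and avoids Arzel\`a--Ascoli, but your phrase ``indeed isometrically'' conceals exactly this verification: one still needs to know that on a half-disk near a boundary point the map is (in charts) the restriction of a \emph{single} element of $\mathrm{Isom}(\mathbb{S}^2)$, so that the analytic continuation across $\partial\Sigma_{r_0}$ is well defined. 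Your appeal to simple connectedness and developing maps is the right idea and does handle it, but it deserves a sentence of its own rather than a parenthesis.
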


\begin{proof}Let $L = \min\{l_x,\pi\}$. We start by showing that the set \[R = \left\{ r\in[0,L) \mid \exists \text{ a local isometry}  f : \Sigma_r\rightarrow S \text{ such that } f(y) = x\right\}\] is equal to $[0,L)$.

\medskip
We prove that $R$ is a closed subset of $[0,L)$. Let $(r_k)_{k\in\mathbb{N}}$ be an increasing sequence in $R$ which converges to some number $r$ in $[0,L)$. Let $f_{r_k} : \Sigma_{r_k} \rightarrow S$, $f_{r_k}(y) = x$, be a sequence of local isometries indexed by $k\in\mathbb{N}$. We can extend the domain of $f_{r_k}$ to $\Sigma_r$ by keeping $f_{r_k}$ constant along the geodesics in $\Sigma_r - \text{int }(\Sigma_{r_k})$ orthogonal to the boundary $\partial\Sigma_{r_k}$. Therefore, we have an equicontinuous family of maps $f_{r_k}: \Sigma_r\rightarrow S$, for $k\in\mathbb{N}$.  Using Arzel\'a-Ascoli Theorem, by passing to a subsequence we can assume that the sequence $(f_{r_k})$ converges to a limit $f:\Sigma_r\rightarrow S$. In the following, we prove that $f$ is a local isometry which in turn proves that $R$ is closed.

We first prove that $f$ is a local isometry in the interior of $\Sigma_r$. We take any point $p\ne y$ in the interior of $\Sigma_r$. Let $q = \lim_{k\rightarrow\infty} f_{r_k}(p)\in S$. Since $0 < d_S(x, q) < L$, $q$ is a regular point of $S$, and hence, there exists $\pi/2>\delta>0$  such that the $\delta $-neighborhood $V$ of $q$ in $S$ is a spherical disk of radius $\delta$. We can choose $k_0\in\mathbb{N}$ and an $\delta/2$-neighborhood $U$ of $p$ in $\Sigma$ such that, for $k\ge k_0$, $U\subset \Sigma_{r_k}$ and $d_S(q, f_{r_k}(p))<\delta/2$. We may also assume that $U$ is isometric a spherical disk. It is clear that, for $k\ge k_0$, $U$ maps into $V$ under $f_{r_k}$. Since $V$ has radius $<\pi/2$, $V$ does not contain any closed geodesic. Moreover, $U$ is geodesically convex. Therefore, $f_{r_k}|_U$ is an isometry onto it's image. As a result, $f|_U$ is an isometric embedding into $S$.

We now show that $f$ is a local isometry at any boundary point $c\in\partial\Sigma_r$. The boundary $C_r = \partial\Sigma_r$ is a round circle in $\Sigma$. Let $c\in C$ and $U$ be an $\delta$-neighborhood of $c$ in $\Sigma$, for some $\delta<\min\{r,L-r\}$. Note that $f(c)$ is a regular point because we assumed that a geodesic from $x_i$ to any singular point has length at least $l_x>r$. Let $V$ be the $\delta$-neighborhood of $f(c)$ in $S$. We can also assume that $U$ and $V$ are spherical disks with centers $c$ and $f(c)$ respectively. From above, we know that $f$ maps $U_1 = U\cap \Sigma_r - \partial\Sigma_r$ locally isometrically into $V$. We show that  $f|_{U_1}$ is an isometric embedding.
If $U_1$ is geodesically convex, which happens precisely when $r<\pi/2$,  this follows as before. But this argument fails when $r>\pi/2$. In this case, let $f(t_1) = f(t_2)$, for some $t_1,t_2$ in $U_1$. We can find a third point $t_3$ in $U_1$ such that, for $i=1,2$, $t_i$ and $t_3$ can be connected by an (unnormalized) geodesic segment $\gamma_i:[0,1]\rightarrow U_1$, $\gamma_i(0) = t_3$, $\gamma_i(1) = t_i$. The geodesics  $f\circ \gamma_1$ and $f\circ\gamma_2$ in $V$ both have same initial and final points. Since $df$ is injective at all points of $U_1$, $\gamma'_1(0) = \gamma'_2(0)$ which then implies, after suitable renormalization, that either $\gamma_1 \subset \gamma_2$ or $\gamma_1 \supset\gamma_2$. Therefore, $t_1$ and $t_2$ must be joined by a geodesic segment in $U_1$, say $\gamma_3$. But then $f\circ\gamma_3$ is a closed geodesic in $S$ which forces $t_1=t_2$ because $V$ has radius at most $\pi/2$. As a result, $f|_{U_1}$ is injective i.e. an embedding. In this case, $f|_{U_1}$ can be extended to an embedding of $U$ in $V$ i.e. $f$ is a local isometry at $c$. Thus  $R$ is closed in $[0,L)$.

\medskip
Next we show that $R$ is also an open subset of $[0,L)$. It is clear that if $r\in R$, then the interval $[0,r]$ is also a subset of $R$. If for all $r\in R$ there exists some $s>r$ such that $s\in R$, then $r$ is an interior point of $R$. This shows that $R$ is an open subset of $[0,L)$.

\begin{lem}
Let $f : \Sigma_r\rightarrow S$ be a local isometry such that $f(y) = x$. Then, there exists some $s>r$ and a local isometry $\tilde{f} : \Sigma_s \rightarrow S$ such that $\tilde{f}|_{\Sigma_r} = f$.
\end{lem}

\begin{proof}
%Given such a map $f : \Sigma_r\rightarrow S$, we desire to extend $f$ to a local isometry $\tilde{f} : \Sigma_s \rightarrow S$ for some $s>r$. 
Let $c\in C_r$ be any point in the boundary $C_r = \partial \Sigma_r$. Since $f$ is a locally isometry, there exists a neighborhood $B$ of $c$ in $\Sigma$ and an embedding $f_1: B\rightarrow S$ such that $f|_{B\cap \Sigma_r} = f_1|_{B\cap \Sigma_r}$. The maps $f$ and $f_1$ patch together to  extend $f$ on the larger domain $\Sigma_r \cup B$ and such an extension is unique because our maps are analytic.

Let $c_1,\dots, c_p$ be points on $C_r$ with the following properties: (i) Each point $c_j$ has a neighborhood $B_j$ in $\Sigma$ such that $f$ can be extended to a local isometry $\tilde{f}_j : B_j \cup \Sigma_r\rightarrow S$. (ii) $\bigcup_{j=1}^p B_j$ covers the circle $C_r$. (iii) $(B_j\cap B_i)\cup \Sigma_r$ is connected. For example, we could choose $B_j$ to be a small disk centered at $c_j$. Clearly, (ii) implies that $\Sigma_r \cup \bigcup_{j=1}^p B_j$ contains $\Sigma_s$, for some $s>r$. Moreover, the uniqueness of the extensions $\tilde{f}_j$ and, (iii) ensure that they can be patched together to form an extension $\tilde{f} : \Sigma_r \cup \bigcup_{j=1}^p B_j \rightarrow S$ of $f$. Then $\tilde{f}|_{\Sigma_s}$ is an extension of $f$.
\end{proof}

The lemma combined with the fact that $R$ is closed in $[0,L)$ shows that any given local isometry $f_r : \Sigma_r \rightarrow S$, sending $y$ to $x$, can be extended to a local isometry $f_s : \Sigma_r\rightarrow S$, for any $L>s>r$. Therefore, given a local isometry $f_{r_0}: \Sigma_{r_0} \rightarrow S$ sending $y$ to $x$, for $0<r_0<L$, we have a sequence of local isometries $f_r : \Sigma_r \rightarrow S$, $f_r(y) = x$, indexed by $r\in[0,L)$ such that $f_s|_{\Sigma_r} = f_r$, for $s\ge r$. This completes the proof of the proposition.
\end{proof}

\begin{rem}The family of local isometries in the proposition may not be replaced by a family of isometries, as shown in the following example:
\end{rem}
\begin{exmp}Let $D$ be the closed upper hemisphere of the standard sphere $\mathbb{S}^2$. The boundary of $D$ is a geodesic. Let $x_1,x_2,x_3,x_4$ be evenly distributed points on the boundary as shown in the figure below. 
\begin{figure}[h]
\centering
\begin{tikzpicture}[line cap=round,line join=round,>=triangle 45,x=1cm,y=1cm]
        \clip(-3.26,-3.36) rectangle (2.98,2.56);
        \draw [line width=1.2pt] (-0.14,-0.35) circle (2.320086205294967cm);
        \draw (-0.44,2.56) node[anchor=north west] {$x_1$};
        \draw (2.42,-0.12) node[anchor=north west] {$x_2$};
        \draw (-0.26,-2.86) node[anchor=north west] {$x_3$};
        \draw (-3.18,-0.12) node[anchor=north west] {$x_4$};
        \draw (1.74,1.5) node[anchor=north west] {$a$};
        \draw (1.64,-1.86) node[anchor=north west] {$b$};
        \draw (-2.34,-1.86) node[anchor=north west] {$c$};
        \draw (-2.4,1.58) node[anchor=north west] {$d$};
        \draw [->,line width=1pt] (-1.974999492308493,1.0697101335228867) -- (-1.705522711622132,1.3622904658367068);
        \draw [->,line width=1pt] (1.5050005076915067,-1.9802898664771131) -- (1.7744772883778677,-1.687709534163293);
        \draw [->,line width=1pt] (1.839687411484705,0.8598089736850975) -- (1.630740307505552,1.149092646695041);
        \draw [->,line width=1pt] (-1.740312588515295,-2.050191026314902) -- (-1.9492596924944483,-1.7609073533049584);
        \draw (-0.36,-.12) node[anchor=north west] {$D$};
        \begin{scriptsize}
        	\draw [fill=black] (-2.459012431119667,-0.42057863920798977) circle (2pt);
        	\draw [fill=black] (-0.14,1.970086205294967) circle (2pt);
        	\draw [fill=black] (2.179895557675162,-0.3202577492605749) circle (2pt);
        	\draw [fill=black] (-0.09948065761460276,-2.6697323515640026) circle (2pt);
        \end{scriptsize}
\end{tikzpicture}
\end{figure}
By identifying the directed edges $a$ with $c$ and $b$ with $d$, we obtain a torus $T$ with a spherical metric having only one conical singularity $x$ of angle $4\pi$, and in this case, $l_x = \pi/2$ because the shortest geodesic loop based at $x$ has length $\pi/2$. Note that a local isometry $f_r : \Sigma_r\rightarrow S$ fails to be an injective map when $r>\pi/4$.
\end{exmp}

In the following, we assume that $l_x\ge \pi$. Let $\dot{\Sigma}$ denote the punctured sphere $\Sigma - \{y\}$. Using Proposition \ref{R}, we have a family of local isometries $f_r : \Sigma_r\rightarrow S$, $f_r(y) = x$,  indexed by $r\in[0,\min\{l_x,\pi\})$, such that, for $s>r$, $f_{s}|_{\Sigma_r} = f_r$. We construct a map
\[\dot{F} : \dot{\Sigma} \rightarrow S\]
by setting $\dot{F}(z) = f_r(z)$, where $r = d_\Sigma (y,z)$. It is clear that $\dot{F}$ is a local isometry, and hence, a Lipschitz continuous function with Lipschitz constant $1$. Since the codomain of $\dot{F}$ is a complete metric space,  $\dot{F}$ can be extended to a map $F:\Sigma\rightarrow S$.

\begin{prop}\label{surjective}
$F : \Sigma \rightarrow S$ constructed above is a surjective map.
\end{prop}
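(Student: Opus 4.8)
The plan is to show that the image $F(\Sigma)$ is both open and closed in $S$; since $S$ is connected, this forces $F(\Sigma) = S$. Closedness is immediate: $\Sigma$ is compact (it is a closed football), $F$ is continuous, so $F(\Sigma)$ is compact, hence closed in the Hausdorff space $S$. The content is in openness, and here one must treat three kinds of points of $\Sigma$ separately: the puncture's image $x = F(y)$, the image of a regular interior point $z$ with $d_\Sigma(y,z) < \pi$, and the image of an ``equatorial'' or ``antipodal'' point $z'$ with $d_\Sigma(y,z') = \pi$ (equivalently $F(z') = F(y')$, the second football vertex).

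For a regular point $z \in \dot\Sigma$ with $0 < d_\Sigma(y,z) < \pi$, we already know from Proposition~\ref{R} (and its proof) that $\dot F = F|_{\dot\Sigma}$ restricts to an isometric embedding of a small spherical disk around $z$ onto a spherical disk around $F(z)$ in $S$; in particular $F(z)$ is an interior point of $F(\Sigma)$. For the vertex $y$ itself: for each $r < \pi$ the map $f_r : \Sigma_r \to S$ is a local isometry sending $y$ to $x$, and since $\Sigma_r$ contains a full model neighborhood of the cone point $y$ (a small metric ball around $y$), $f_r$ maps a neighborhood of $y$ onto a neighborhood of $x$ in $S$ — this uses that $x$ and $y$ have the same cone angle and that $l_x \ge \pi$ guarantees no singular point of $S$ obstructs the model ball. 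So $x$ is interior to $F(\Sigma)$ as well. The main obstacle is the antipodal point(s): at $z'$ with $d_\Sigma(y,z')=\pi$, the maps $f_r$ are only defined for $r < \pi$, so $F(z')$ is obtained as a limit and we do not a priori have a local isometry defined on a neighborhood of $z'$. Here I would argue by extending $F$ across $z'$: take a regular point $z$ slightly inside (with $d_\Sigma(y,z) = r < \pi$ close to $\pi$) near $z'$, use the local isometry near $z$ to push a small spherical disk forward, and observe that geodesics emanating from $x$ in $S$ parametrized by arc length up to $\pi$ trace out exactly the image of the geodesics from $y$ in $\Sigma$; running these geodesics all the way to parameter $\pi$ and using completeness of $d_S$, one finds that $F$ is in fact a local isometry in a punctured neighborhood of $z'$, and then that $F(z')$ is a regular point of $S$ (using $l_x \ge \pi$ once more, so no conical singularity of $S$ other than $x$ sits at distance $\le \pi$ along these geodesics) with $F$ mapping a full neighborhood of $z'$ onto a spherical disk about $F(z')$. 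This requires a little care about whether the various geodesics from $y$ through points near $z'$ are sent to geodesics that reconverge at $F(z')$ the way they converge at $z'$ in $\Sigma$; the argument is essentially the same convexity/no-closed-geodesic argument used for the boundary points $c \in \partial\Sigma_r$ with $r > \pi/2$ in the proof of Proposition~\ref{R}.

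Thus $F(\Sigma)$ is open and closed and nonempty, so $F(\Sigma) = S$, and $F$ is surjective. I expect the delicate step to be the local-isometry extension across the antipodal locus $F^{-1}(y')$, specifically verifying that no conical singularity of $S$ interferes at distance exactly $\pi$ from $x$ and that the limiting map is genuinely a local isometry there rather than merely $1$-Lipschitz; the hypothesis $l_x \ge \pi$ is exactly what rules out the obstruction, and the compactness of $\Sigma$ is what upgrades the conclusion from ``$F(\Sigma)$ contains a neighborhood of each of its points'' to global surjectivity.
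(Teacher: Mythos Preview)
Your overall architecture (image is open and closed, then use connectedness of $S$) matches the paper's, and your treatment of regular interior points and of the vertex $y$ is fine. The gap is at the antipodal vertex $y'$.

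You assert that $F(y')$ is a regular point of $S$, justifying this by ``$l_x \ge \pi$, so no conical singularity of $S$ other than $x$ sits at distance $\le \pi$ along these geodesics.'' But $l_x \ge \pi$ only rules out singularities at distance \emph{strictly less than} $\pi$ along geodesics from $x$; it says nothing about distance exactly $\pi$. Indeed, the conclusion of Theorem~\ref{thm:general} explicitly allows $|X|=2$: when $S$ is itself a football, $F$ is a global isometry and $F(y')$ \emph{is} the second conical point. In that situation your claim that $F$ carries a neighborhood of $y'$ onto a spherical disk around a regular point is simply false, and the attempted extension of the local isometry across $y'$ breaks down (the cone angle at $F(y')$ need not equal the cone angle at $y'$). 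Your closing paragraph even flags this as the delicate step, but then asserts that $l_x\ge\pi$ ``is exactly what rules out the obstruction'' --- it does not.

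The paper avoids this trap entirely with a purely topological argument that is indifferent to whether $F(y')$ is regular or singular. It takes a disk neighborhood $D$ of $F(y')$ in $S$ (small enough to miss $x$), observes that $S' := F(\Sigma\setminus\{y,y'\})$ is open (image of an open set under a local isometry) while $S'\cup\{F(y')\} = F(\Sigma)\cap D$ is closed in $D$ (since $F(\Sigma)$ is compact). Then $D\setminus\{F(y')\}$ is a connected set written as the disjoint union of the open sets $D\cap S'$ and $D\setminus\bigl((D\cap S')\cup\{F(y')\}\bigr)$; since the first is nonempty, the second is empty, so $D\subset F(\Sigma)$. This gives interiority of $F(y')$ with no analysis of the metric structure near $F(y')$ at all. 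You should replace your local-isometry extension argument at $y'$ with this connectedness argument, or else add a separate case handling the possibility that $F(y')\in X$.
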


\begin{proof}
Since $F|_{\Sigma-\{y,y'\}}$ is a local isometry, image $S'$ of $\Sigma-\{y,y'\}$ under $F$ is open in $S$. Adjoining the points $x = F(y)$ and $F(y')$ compactifies $S'$. $F(y) = x$ is an interior point of the image $F(\Sigma)$. We prove that $F(y')$ is also an interior point of the image $F(\Sigma)$. Let $D$ be a neighborhood of $F(y')$, homeomorphic to $\mathbb{R}^2$. $D\cap S'$ is a nonempty open set in $D$ which becomes a closed subset when we adjoin $F(y')$. So, $D\cap S'$ and complement of $(D\cap S')\cup \{F(y')\}$ are disjoint open subsets whose union is $D - \{F(y')\}$. Since $D - \{F(y')\}$ is connected, the complement of $(D\cap S')\cup \{F(y')\}$ must be empty, i.e. $D\cap S' \subset S'$. This means that $F(\Sigma)$ contains $D$.

Hence, the image $F(\Sigma)$ is open in $S$. Since $S$ is connected, $F(\Sigma) = S$.
\end{proof}

We close this section by completing the proof of Theorem \ref{thm:general}.

\begin{proof}[Proof of Theorem \ref{thm:general}] Suppose that $S$ has no geodesic loop based at $x$ of length $\le\pi$. If $\displaystyle\min_{x\ne x'\in X}\{d_S(x,x')\}$ $ \ge \pi$, then $l_x\ge \pi$. The image $F(\dot{\Sigma})$ constructed above is a regular open subset of $S$, i.e. it contains no conical singular point of $S$. From Proposition \ref{surjective}, this image misses at most two points of $S$, namely, $F(x)$ and $F(x')$. Therefore, $S$ can have at most two singular points. Compactness of $S$ follows from the surjectivity of $F$.
\end{proof}

\section{Proof of Theorem \ref{thm:main}}

Let $S$ be a closed, genus zero surface with a spherical metric $g$ with $n$ conical singularities $x_1,\dots, x_n$ of angles $2\pi\t_1,\dots,2\pi\t_n$ respectively, where all $\theta_i$'s are non-integers and the tuple $(\theta_1, \dots,\theta_n)$ satisfies (\ref{H'}). An important feature of the metric $g$ on $S$ is that the holonomy representation is coaxial, which follows from Theorem \ref{panov1}. 

\begin{prop}\label{prop:distance}
Let $\gamma$ be a geodesic arc in $S$ with singular endpoints. Then the length of $\gamma$ is an integral multiple of $\pi$. In particular, for $i\ge 2$, $d_S(x_1,x_i)\ge \pi$ and $S$ has no geodesic loop based at $x_1$ of length shorter than $\pi$.
\end{prop}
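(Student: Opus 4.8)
The argument will run through the developing map and exploit that the holonomy is coaxial. Recall that the holonomy $\phi\colon\pi_1(S\setminus X)\to SO(3)$ (where $X=\{x_1,\dots,x_n\}$) is coaxial by Theorem~\ref{panov1}; after conjugation we may assume $\mathrm{im}\,\phi$ lies in the circle group $G$ of rotations about a fixed axis, whose two endpoints $N,S\in\mathbb S^2$ are the only common fixed points of the nontrivial elements of $G$. Fix an equivariant developing map $D\colon\widetilde{S\setminus X}\to\mathbb S^2$, so $D\circ\rho=\phi(\rho)\circ D$ for every deck transformation $\rho$; it is a local isometry, hence $1$-Lipschitz for the length metric on $\widetilde{S\setminus X}$, and it carries geodesics to arcs of great circles.

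The first (and main) step is to show that every lift of every conical point develops to one of the poles $N,S$. Since $D$ is $1$-Lipschitz it extends continuously to the metric completion of $\widetilde{S\setminus X}$, which adjoins the lifts of the points of $X$, all at finite distance. Fix $x_i$ and a lift $\tilde x_i$ of it in the completion; the deck transformation $\rho_i$ encircling $x_i$ fixes $\tilde x_i$, because a loop around $x_i$ of small $d_S$-radius $r$ lifts to a path of length $O(r)$ from a lift of a point to its $\rho_i$-translate. Writing $P_i=D(\tilde x_i)$, equivariance gives $P_i=D(\rho_i\tilde x_i)=\phi(\rho_i)(P_i)$. Now $\phi(\rho_i)$ is the rotation about the axis through the cone angle $2\pi\theta_i$, which is \emph{nontrivial} precisely because $\theta_i\notin\mathbb N$; hence $P_i\in\{N,S\}$, and applying $\phi$ to arbitrary deck transformations shows every lift of $x_i$ develops into $\{N,S\}$. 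This is the step I expect to require the most care, since it uses the non-integrality hypothesis essentially and needs the continuity of the extension and the fact that $\rho_i$ fixes a lift of $x_i$ to be pinned down cleanly; alternatively one may invoke the standard normal form $D(z)=(\text{isometry})\circ\bigl(z\mapsto z^{\theta_i}h(z)\bigr)$ with $h(0)\ne0$ for the developing map near a conical point of non-integral angle, which yields the same conclusion together with coaxiality.

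Granting this, let $\gamma\colon[0,\ell]\to S$ be a geodesic arc parametrized by arc length with $\gamma(0),\gamma(\ell)\in X$ and $\ell>0$ (the case $\gamma(0)=\gamma(\ell)$, a geodesic loop, is allowed). Lift $\gamma|_{(0,\ell)}$ to $\tilde\gamma$ in $\widetilde{S\setminus X}$; as $t\to0^+$ and $t\to\ell^-$ the curve $\tilde\gamma$ converges in the completion to lifts of $\gamma(0)$ and $\gamma(\ell)$, so by the previous step $D\circ\tilde\gamma$ extends to a unit-speed geodesic $[0,\ell]\to\mathbb S^2$ whose two endpoints both lie in $\{N,S\}$. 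A great-circle arc through $N$ lies on a meridian — a great circle of length $2\pi$ passing through both $N$ and $S$, on which $N$ and $S$ are antipodal, hence at distance $\pi$. An arc of such a circle running between two points of $\{N,S\}$ has length in $\pi\mathbb Z$; since $\ell>0$ it follows that $\ell$ is a positive integral multiple of $\pi$, in particular $\ell\ge\pi$.

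The two remaining assertions are then immediate. A geodesic loop based at $x_1$ is a nonconstant geodesic arc with singular endpoints, hence of length a positive multiple of $\pi$, so it cannot be shorter than $\pi$. For $i\ge2$, the surface $S$ is compact, so $d_S(x_1,x_i)$ is realized by a shortest path, which is piecewise geodesic with breakpoints only at conical points; its initial subarc, from $x_1$ to the first conical point it meets, is a geodesic arc with singular endpoints and therefore has length $\ge\pi$, whence $d_S(x_1,x_i)\ge\pi$.
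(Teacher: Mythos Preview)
Your argument is correct and, in fact, more direct than the paper's. The paper proceeds by quoting Lemma~2.10 of \cite{mondello2016spherical} (geodesic arcs between \emph{distinct} non-integral conical points on a coaxial sphere have length in $\pi\mathbb{Z}$), which immediately settles the case of distinct endpoints. For a geodesic loop $\gamma$ based at $x_1$, the paper then passes to a two-fold branched cover of $S$, branched over regular points $y_1,y_2$ chosen in the two components of $S\setminus\gamma$ adjacent to $x_1$; on this cover $\gamma$ lifts to an arc with distinct endpoints, the holonomy is still coaxial, and the lemma applies again.

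You instead unfold the developing-map argument that underlies Lemma~2.10 itself: coaxial holonomy forces every lift of a non-integral conical point to develop to a pole, so any geodesic arc with singular endpoints---loop or not---develops to a great-circle arc between poles and hence has length in $\pi\mathbb{Z}$. This handles both cases uniformly and avoids the branched-cover trick entirely. What the paper's approach buys is brevity (it cites rather than reproves the key lemma) and a nice illustration of how branched covers can reduce the loop case to the arc case; what your approach buys is self-containment and a cleaner, one-shot argument. Your final deduction of $d_S(x_1,x_i)\ge\pi$ via the first subarc of a minimizing piecewise geodesic is also slightly more explicit than the paper's.
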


\begin{proof}
We shall use Lemma 2.10 of \cite{mondello2016spherical} which states the following.
\begin{lem}
Let $S'$ be a sphere with a spherical metric with conical singularities  having coaxial holonomy. Suppose that ${\gamma}'$ is a geodesic arc from $x_1'$ to $x_2'$, where $x_1'$ and $x_2'$ are distinct conical singularities of angles $\not\in 2\pi\mathbb{N}$. Then the length of ${\gamma}'$ is an integral multiple of $\pi$.
\end{lem}

The lemma shows that the length of $\gamma$ is an integral multiple of $\pi$ if $\gamma$ has distinct endpoints. So, we can assume that $\gamma$ is a geodesic loop based at $x_1$. The complement of $\gamma$ in $S$ is comprised of connected components among which there are exactly two components, say $U_1$ and $U_2$, whose boundary contains the point $x_1$. For $i=1,2$, let $y_i\in U_i$ be a regular point. Let $\rho: \tilde{S}\rightarrow S$ be the two-fold branched covering, branched at the points $y_1$ and $y_2$, where $\tilde{S}$ is a closed, genus zero surface. By pulling back the metric of $S$, $\tilde{S}$ gets a natural spherical metric with a set of conical singularities $\rho^{-1}(X\cup\{y_1,y_2\})$ where $X = \{x_1,\dots,x_n\}$. Moreover, the holonomy of $\tilde{S}$ is coaxial which follows from holonomy representation
\[
\begin{tikzcd}
\pi_1(\tilde{S} - \text{singularities}) \arrow[r,"\rho_*"] & \pi_1({S} - X\cup \{y_1,y_2\}) \arrow[r, "\phi"] & SO(3),
\end{tikzcd}
\]
where $\phi: \pi_1({S} - X\cup \{y_1,y_2\}) \rightarrow SO(3)$ is the holonomy representation associated to $S$ with conical singularities $y_1,y_2,x_1,x_2\dots,x_n$. Here we treat the regular points $y_1$ and $y_2$ of $S$ as singular points of angle $2\pi$. The loop $\gamma$ can be lifted to a geodesic path $\tilde{\gamma}$ with distinct endpoints which are precisely the pre-images of $x_1$. We have
\[ \text{length of } \gamma =  \text{length of } \tilde{\gamma},\]
and the later is an integral multiple of $\pi$ which follows from the lemma.
\end{proof}

\begin{proof}[Proof of Theorem \ref{thm:main}]
Suppose that, for $n\ge 3$, $\pmb{\theta} = (\theta_1,\dots, \theta_n)\in\mathbb{R}_+^n - \mathbb{Z}^n$ satisfies (\ref{P}) and (\ref{H'}). If $\pmb{\theta}$ is admissible, then there is a sphere with a spherical metric with conical singularities $x_1,\dots,x_n$ such that the angle at $x_i$ of $g$ is $2\pi\theta_i$. By Proposition \ref{prop:distance}, $d_S(x_1, x_i) \ge \pi$, for $i\ge 1$, and there is no geodesic loop  based at $x_1$ of length shorter than $\pi$. Using Theorem \ref{thm:general}, we get $|X| \le 2$. This is a contradiction.
\end{proof}

\bibliographystyle{plain}

\Address
\end{document}